\providecommand{\U}[1]{\protect\rule{.1in}{.1in}}
\pgfplotsset{compat=1.15}
\let\oldmathbf\mathbf
\renewcommand{\mathbf}[1]{\boldsymbol{\oldmathbf{#1}}}
\newtheorem{theorem}{Theorem}
\newtheorem*{theorem*}{Theorem}
\newtheorem{corollary}[theorem]{Corollary}
\newtheorem{definition}[theorem]{Definition}
\newtheorem{lemma}[theorem]{Lemma}
\newtheorem{remark}[theorem]{Remark}
\newcommand{\mycomment}[1]{}
\definecolor{zzttqq}{rgb}{0.6,0.2,0.}
\begin{document}
\title[Irregularities of distribution and convex bodies]{\textbf{Irregularities of distribution and Fourier transforms of multi-dimensional convex bodies}}
\author[L. Brandolini]{Luca Brandolini}
\address{Dipartimento di Ingegneria Gestionale, dell'Informazione e della Produzione,
Universit\`{a} degli Studi di Bergamo, Viale Marconi 5, 24044 Dalmine BG, Italy}
\email{luca.brandolini@unibg.it}
\author[L. Colzani]{Leonardo Colzani}
\address{Dipartimento di Matematica e Applicazioni, Università di Milano-Bicocca,
Via Cozzi 55, 20125 Milano, Italy}
\email{leonardo.colzani@unimib.it}
\author[G. Travaglini]{Giancarlo Travaglini}
\address{Dipartimento di Matematica e Applicazioni, Universit\`{a} di Milano-Bicocca,
Via Cozzi 55, 20125 Milano, Italy}
\email{giancarlo.travaglini@unimib.it}
\subjclass[2010]{Primary 11K38, 42B10}
\keywords{Geometric discrepancy, Fourier transforms, Roth theorem, Bruna-Nagel-Wainger theorem, Cassels-Montgomery lemma}
\begin{abstract}
W. Schmidt, H. Montgomery, and J. Beck  proved a result on irregularities of distribution with respect to $d$-dimensional balls. In this paper, we extend their result to any $d$-dimensional convex body with a smooth boundary and  finite order of contact. As an intermediate step, we prove a geometric inequality for the Fourier transform of the characteristic function of a convex body.
\end{abstract}
\maketitle
\section{Introduction}
W. Schmidt \cite{schmidt69}, H. Montgomery \cite{montgomery84,montgomery94}, and J. Beck \cite{beck} proved the following result.
\begin{theorem}
\label{SMBT}
Let $d\geqslant2$, and let  $
\mathbb{T}^{d}=\mathbb{R}^{d}/\mathbb{Z}^{d}$ be the $d$-dimensional torus.  
Let $B\subset
\mathbb{T}^{d}$ be a ball of diameter less than $1$. Then there exists a positive constant
$c$ such that, for any choice of $N$\ points $z_{1},\ldots,z_{N}$ in $\mathbb{T}^{d}$, we have%
\begin{equation}
\int_{0}^{1}\int_{\mathbb{T}^{d}}\left\vert \sum_{j=1}^{N}%
\chi_{ r  B+x}\left(  z_{j}\right)  - N r ^{d}\left\vert B\right\vert
\right\vert ^{2}~dxd r \geqslant c  N^{1-1/d}\;, \label{SMB}
\end{equation}
where $\left\vert B\right\vert $ is the volume of the ball $B$, and $\chi_K(t)$ denotes the characteristic (or indicator) function of a set $K$.
\end{theorem}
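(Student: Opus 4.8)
\emph{Proof strategy.} The plan is to combine Fourier analysis on $\mathbb{T}^d$ with two lower bounds: one for the Fourier transform of the dilated ball averaged over the dilation parameter, and one for exponential sums over the $N$ points. First I would reduce to the case that $B$ is centred at the origin (writing $B=B_0+a$, the substitution $x\mapsto x-ra$ shows the left side of \eqref{SMB} is unchanged), and rewrite $\chi_{rB+x}(z_j)$ as the function $x\mapsto\chi_{rB}(x-z_j)$ on $\mathbb{T}^d$; this is meaningful because $rB$ has diameter less than $1$ for $0\le r\le 1$, so its periodisation is an honest indicator function. Expanding in a Fourier series in $x$, the $m$-th coefficient of $\chi_{rB}(x-z_j)$ is $e^{-2\pi i m\cdot z_j}\,\widehat{\chi_{rB}}(m)=e^{-2\pi i m\cdot z_j}\,r^d\,\widehat{\chi_B}(rm)$ with $\widehat{\chi_B}$ the Fourier transform on $\mathbb{R}^d$, and the $m=0$ term of $\sum_j\chi_{rB}(x-z_j)$ equals $Nr^d|B|$, which is exactly what is subtracted in \eqref{SMB}. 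Hence, writing $S(m)=\sum_{j=1}^N e^{2\pi i m\cdot z_j}$, Parseval's identity and integration in $r$ give
\begin{equation*}
\int_0^1\!\!\int_{\mathbb{T}^d}\Bigl|\sum_{j=1}^N\chi_{rB+x}(z_j)-Nr^d|B|\Bigr|^2 dx\,dr
=\sum_{m\neq 0} w_m\,|S(m)|^2,\qquad w_m:=\int_0^1 r^{2d}\,\bigl|\widehat{\chi_B}(rm)\bigr|^2\,dr .
\end{equation*}

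Next I would prove that $w_m\ge c\,|m|^{-(d+1)}$ for every $m\neq 0$, with $c>0$ depending only on $B$ and $d$. For $|m|$ large this uses the classical asymptotics $\widehat{\chi_B}(\xi)=a_d\,|\xi|^{-(d+1)/2}\cos\!\bigl(2\pi\varrho|\xi|-\tfrac{(d+1)\pi}{4}\bigr)+O\bigl(|\xi|^{-(d+3)/2}\bigr)$, where $\varrho$ is the radius of $B$; setting $\xi=rm$ and using $\cos^2\theta=\tfrac12+\tfrac12\cos 2\theta$, the main term contributes $\tfrac12 a_d^2\,|m|^{-(d+1)}\int_{1/|m|}^{1} r^{d-1}\,dr\asymp|m|^{-(d+1)}$, while the doubled oscillation integrates (by parts) to $O(|m|^{-(d+2)})$ and the range $r\le 1/|m|$ together with the remainder term are likewise $O(|m|^{-(d+2)})$. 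For the finitely many remaining $m$ it suffices that $w_m>0$, which holds because $\widehat{\chi_B}$ is real-analytic and equals $|B|\neq 0$ at the origin, hence does not vanish identically along the ray $\mathbb{R}m$. This dilation-averaging estimate is the heart of the matter: it is the step for which one genuinely needs the oscillatory asymptotics of $\widehat{\chi_B}$ rather than a mere size bound, and it is essentially the Cassels–Montgomery-type lemma referred to in the introduction.

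Finally I would bound $\sum_{0<|m|\le M}|S(m)|^2$ from below for a suitable $M\asymp N^{1/d}$. Fix once and for all a nonnegative $\phi$ on $\mathbb{R}^d$ with $\phi(0)=1$ and $\widehat{\phi}\ge 0$ supported in the open unit ball. For $M\ge 1$, Poisson summation gives $\sum_{m}\phi(m/M)\,e^{2\pi i m\cdot x}=M^d\sum_{n}\widehat{\phi}\bigl(M(x+n)\bigr)\ge 0$, so
\begin{equation*}
\sum_{m}\phi(m/M)\,|S(m)|^2=M^d\sum_{j,k=1}^N\sum_{n}\widehat{\phi}\bigl(M(z_j-z_k+n)\bigr)\ \ge\ M^d\sum_{j=1}^N\widehat{\phi}(0)=N M^d\widehat{\phi}(0).
\end{equation*}
Subtracting the $m=0$ term $\phi(0)|S(0)|^2=N^2$ and choosing $M$ with $N M^d\widehat{\phi}(0)\ge 2N^2$ (the case of bounded $N$ being trivial) yields $\sum_{0<|m|\le M}|S(m)|^2\ge N^2$ with $M\asymp N^{1/d}$. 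Combining the three steps,
\begin{equation*}
\int_0^1\!\!\int_{\mathbb{T}^d}\Bigl|\sum_{j=1}^N\chi_{rB+x}(z_j)-Nr^d|B|\Bigr|^2 dx\,dr
\ \ge\ c\sum_{0<|m|\le M}|m|^{-(d+1)}|S(m)|^2
\ \ge\ c\,M^{-(d+1)}N^2\ \asymp\ N^{1-1/d},
\end{equation*}
which is the assertion. The one delicate point, as noted, is the lower bound for $w_m$: controlling the cancellation coming from the oscillation of $\widehat{\chi_B}$ along the ray through $m$.
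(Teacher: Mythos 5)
Your overall architecture coincides with the paper's proof of Theorem \ref{d3}, of which Theorem \ref{SMBT} is the special case $\Omega=B$: expand the discrepancy in a Fourier series in $x$, use Parseval to reduce everything to the weights $w_m=\int_0^1 r^{2d}\left|\widehat{\chi_B}(rm)\right|^2dr$ multiplying $|S(m)|^2$, prove $w_m\geqslant c\,|m|^{-(d+1)}$ by exploiting the average in the dilation parameter (this is the role of Theorem \ref{easy} and Corollary \ref{cor:Fourier da sotto} in the paper, specialized to the ball; your direct computation with the Bessel asymptotics, the $\cos^2$ expansion, the integration by parts on the oscillatory part, the range $r\leqslant 1/|m|$, and the finitely many small $|m|$ is correct), and finish with an exponential-sum bound at scale $M\asymp N^{1/d}$. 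One mislabel: the Cassels--Montgomery lemma invoked in the paper is precisely the exponential-sum inequality of your third step, not the $w_m$ estimate of your second step.

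The genuine gap is in that third step. You choose $\phi\geqslant 0$ with $\widehat{\phi}\geqslant 0$ supported in the unit ball; then $\phi$ cannot be compactly supported (it is the restriction of an entire function), so your Poisson-summation inequality controls the full weighted sum and, after removing the $m=0$ term, gives $\sum_{m\neq 0}\phi(m/M)|S(m)|^2\geqslant NM^d\widehat{\phi}(0)-N^2$; this does \emph{not} yield the truncated bound $\sum_{0<|m|\leqslant M}|S(m)|^2\geqslant N^2$ that you then use, since a priori the mass could sit at frequencies $|m|>M$, where $\phi(m/M)>0$, and the crude tail estimate ($|S(m)|^2\leqslant N^2$ with $\sum_{|m|>M}\phi(m/M)\asymp M^d$) is far too lossy to remove it. Two standard repairs: (i) put the support condition on the other side, i.e. take $\phi\geqslant 0$ supported in the unit ball with $\widehat{\phi}\geqslant 0$, e.g. $\phi=c\,\chi_U*\chi_U$ with $U$ the ball of radius $\frac12$ centered at the origin; then only $|m|<M$ occur, the kernel $\sum_m\phi(m/M)e^{2\pi i m\cdot x}=M^d\sum_n\widehat{\phi}(M(x+n))$ is still nonnegative, and since $0\leqslant\phi\leqslant\phi(0)$ you recover exactly the lemma quoted in the paper, $\sum_{0<|m|<M}|S(m)|^2\geqslant cNM^d$; or (ii) keep your $\phi$ and do not truncate at all: since $\phi$ is Schwartz, $\phi(m/M)\leqslant C M^{d+1}|m|^{-(d+1)}$ for every $m\neq 0$, hence $\sum_{m\neq 0}|m|^{-(d+1)}|S(m)|^2\geqslant c\,M^{-(d+1)}\sum_{m\neq 0}\phi(m/M)|S(m)|^2\geqslant c\,NM^{-1}\asymp N^{1-1/d}$, which combined with $w_m\geqslant c\,|m|^{-(d+1)}$ finishes the proof. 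With either repair your argument is complete and matches the paper's scheme.
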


An earlier result by D. Kendall \cite{Ken} for ellipsoids shows that the lower bound  $N^{1-1/d}$  in (\ref{SMB}) is best possible.

A natural question concerns the necessity of the average over the radii $r$ on the LHS of (\ref{SMB}).
The result in \cite{ParSob} (see also  \cite{BCGT,ChenTrav}) on the lattice points discrepancy shows that an average is necessary when the dimension $d\equiv 1\mod 4$. We do not know whether an average is necessary when $d\not\equiv 1\mod 4$, nevertheless in \cite{BMS} and \cite{BGGM} the authors consider analogs of the above result for single radius spherical cap discrepancy on spheres and compact two-point homogeneous spaces, showing that when $d\not\equiv 1\mod 4$ a suitable choice of the radius suffices.

\medskip

The relevance of Theorem \ref{SMBT}  becomes more evident when comparing  the discrepancy with respect to balls to the discrepancy with respect to cubes in the celebrated result by K. Roth \cite{roth}, as stated here in the equivalent form proved by H. Montgomery \cite{montgomery84,montgomery94}. See also \cite{drmota1}. 
\begin{theorem} \label{Ro}
Let $d\geqslant2$ and consider the cube $Q=\left[  0,1\right)^{d}$. Then, for every choice of
$N$ points $z_{1},\ldots ,z_{N}$  in 
$\mathbb{T}^{d}$, we have%
\begin{equation}
\int_{0}^{1}\int_{\mathbb{T}^{d}}\left\vert \sum_{j=1}^{N}%
\chi_{ r  Q+x}\left(  z_{j}\right)  - N r ^{d}\right\vert ^{2}%
~dxd r \geqslant c  \log^{d-1}\left(  N\right)  \;. \label{R}
\end{equation}
\end{theorem}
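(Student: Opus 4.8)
The plan is to pass to Fourier series on $\mathbb{T}^{d}$ in the translation variable, to check that averaging over the single radius $r$ produces essentially the same Fourier weights as an average over $d$ independent side lengths, and then to invoke Roth's orthogonal function method for arbitrary axis-parallel boxes. For the first step, write $e(t)=e^{2\pi it}$ and $S(\mathbf{k})=\sum_{j=1}^{N}e(\mathbf{k}\cdot z_{j})$ for $\mathbf{k}\in\mathbb{Z}^{d}$. For fixed $r\in(0,1)$ consider $x\mapsto\sum_{j}\chi_{rQ+x}(z_{j})-Nr^{d}$ on $\mathbb{T}^{d}$; since $z_{j}\in rQ+x$ exactly when $x\in z_{j}-rQ$, expanding in Fourier series and using Parseval's identity — the $\mathbf{k}=\mathbf{0}$ term dropping out because each cube has volume $r^{d}$ — one gets $\int_{\mathbb{T}^{d}}|\sum_{j}\chi_{rQ+x}(z_{j})-Nr^{d}|^{2}\,dx=\sum_{\mathbf{k}\neq\mathbf{0}}|S(\mathbf{k})|^{2}|\widehat{\chi_{rQ}}(\mathbf{k})|^{2}$, where $\widehat{\chi_{rQ}}(\mathbf{k})=\prod_{i=1}^{d}\int_{0}^{r}e(-k_{i}t)\,dt$ (the $i$-th factor being $r$ when $k_{i}=0$). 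Integrating over $r$, the left-hand side of (\ref{R}) becomes $\sum_{\mathbf{k}\neq\mathbf{0}}|S(\mathbf{k})|^{2}\int_{0}^{1}|\widehat{\chi_{rQ}}(\mathbf{k})|^{2}\,dr$.

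The key point is that the single radius does no harm to the weights. Since $|\widehat{\chi_{rQ}}(\mathbf{k})|^{2}=\prod_{k_{i}\neq0}\dfrac{\sin^{2}(\pi k_{i}r)}{\pi^{2}k_{i}^{2}}\cdot\prod_{k_{i}=0}r^{2}$, I would show $\int_{0}^{1}|\widehat{\chi_{rQ}}(\mathbf{k})|^{2}\,dr\geq c_{d}\prod_{i=1}^{d}\max(1,|k_{i}|)^{-2}$ for every $\mathbf{k}\neq\mathbf{0}$, with $c_{d}>0$. For the nonzero coordinates this amounts to $\int_{0}^{1}\prod_{k_{i}\neq0}\sin^{2}(\pi k_{i}r)\,dr\gg_{d}1$, which follows from a union bound: for each $i$ the set of $r\in[0,1]$ with $k_{i}r$ within $\tfrac{1}{4d}$ of an integer has measure $\tfrac{1}{2d}$, so on a set of measure $\geq\tfrac12$ one has $|\sin(\pi k_{i}r)|\geq\sin\tfrac{\pi}{4d}$ simultaneously for all $i$, whence the integral is $\geq\tfrac12(\sin\tfrac{\pi}{4d})^{2d}$; the factors $r^{2}$ from vanishing coordinates cost only a further constant, since $\prod_{k_{i}\neq0}\sin^{2}(\pi k_{i}r)$ is symmetric about $r=\tfrac12$ and $r^{2}\geq\tfrac14$ on $[\tfrac12,1]$. (For frequencies whose nonzero coordinates have very different dyadic sizes no $\pm1$ combination of them vanishes and the integral equals $2^{-d}$ exactly, so the weights are then literally those of the box discrepancy.) Hence the left-hand side of (\ref{R}) is $\geq c_{d}\sum_{\mathbf{k}\neq\mathbf{0}}\prod_{i}\max(1,|k_{i}|)^{-2}|S(\mathbf{k})|^{2}$, and this sum is in turn $\geq c_{d}'\|D_{N}\|_{L^{2}([0,1)^{d})}^{2}$ with $c_{d}'>0$, by the classical Fourier bound for the $L^{2}$ star-discrepancy, where $D_{N}(\mathbf{t})=\#\{j:z_{j}^{(i)}<t_{i}\text{ for all }i\}-Nt_{1}\cdots t_{d}$. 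This is the only step where the cubical shape is used; what remains is Roth's theorem for arbitrary axis-parallel boxes.

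To bound $\|D_{N}\|_{L^{2}}^{2}$ below by $c\log^{d-1}N$ I would use Roth's orthogonal function method. Fix $L$ with $2N\leq2^{L}<4N$. For each shape $\mathbf{a}=(a_{1},\dots,a_{d})\in\mathbb{Z}_{\geq1}^{d}$ with $a_{1}+\cdots+a_{d}=L$ — of which there are $\binom{L-1}{d-1}\asymp_{d}\log^{d-1}N$ — split $[0,1)^{d}$ into the $2^{L}$ dyadic boxes $R$ of side lengths $(2^{-a_{1}},\dots,2^{-a_{d}})$ and form the tensor Haar function $h_{R}=\prod_{i}h^{(i)}_{R_{i}}$, where $h^{(i)}_{R_{i}}$ is $+1$ on the left half of $R_{i}$, $-1$ on the right half, and $0$ elsewhere. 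A short computation gives $\langle D_{N},h_{R}\rangle=(-1)^{d+1}N4^{-d}2^{-2L}$ for every box $R$ meeting none of the points $z_{j}$: the point-counting part of $D_{N}$ is orthogonal to $h_{R}$ as soon as a point lies outside $R$, while $\langle t_{1}\cdots t_{d},h_{R}\rangle=(-1/4)^{d}2^{-2L}$ provides the signal. The $h_{R}$, over all admissible shapes and all positions, are mutually orthogonal (in a coordinate where two shapes differ, one of the one-dimensional Haar functions is constant on the shorter support of the other, which has mean zero). Since at most $N\leq2^{L-1}$ of the $2^{L}$ boxes of a given shape meet the configuration, Bessel's inequality produces a contribution $\geq2^{L-1}(N4^{-d}2^{-2L})^{2}/2^{-L}\gg_{d}1$ per shape to $\|D_{N}\|_{L^{2}}^{2}$, so $\|D_{N}\|_{L^{2}}^{2}\gg_{d}\log^{d-1}N$; combined with the previous paragraph, this yields (\ref{R}).

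The conceptual crux — and the only place where the single radius, rather than $d$ independent side lengths, matters — is the weight estimate: one must see that integrating $\prod_{i}\sin^{2}(\pi k_{i}r)$ over $r\in(0,1)$ still leaves the weights $\prod_{i}\max(1,|k_{i}|)^{-2}$ that drive Roth's theorem, rather than something decaying faster, so that the full $\asymp\log^{d-1}N$ worth of distinct frequency aspect ratios survives; the argument for this is elementary. The Haar estimate is Roth's classical $L^{2}$ lower bound, recalled here for completeness, and the Fourier reduction, together with the standard dictionary between the $L^{2}$ star-discrepancy and the weighted exponential sum, is routine.
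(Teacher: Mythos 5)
Your first and third paragraphs are sound: the Parseval reduction of the left-hand side of (\ref{R}) to $\sum_{\mathbf{k}\neq\mathbf{0}}|S(\mathbf{k})|^{2}\int_{0}^{1}|\widehat{\chi_{rQ}}(\mathbf{k})|^{2}\,dr$, the single-radius weight bound $\int_{0}^{1}|\widehat{\chi_{rQ}}(\mathbf{k})|^{2}\,dr\geqslant c_{d}\prod_{i}\max(1,|k_{i}|)^{-2}$ via the union bound, and the Haar-function proof of Roth's bound for the anchored discrepancy $D_{N}$ are all correct (the reduction to the weighted exponential sum is exactly Montgomery's; the paper itself only cites Theorem \ref{Ro} and does not reprove it). The genuine gap is the bridge between them: the asserted inequality $\sum_{\mathbf{k}\neq\mathbf{0}}\prod_{i}\max(1,|k_{i}|)^{-2}|S(\mathbf{k})|^{2}\geqslant c_{d}'\,\|D_{N}\|_{L^{2}}^{2}$ is not a classical fact, and it is false. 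The left side is invariant under translating the point set modulo $1$, while the anchored star discrepancy is not, and the two can differ by a power of $N$. Already in the one-dimensional form of your claim (the obstruction is dimension-independent): take $m=\lceil\sqrt{N}\rceil$ points clustered at $1-\eta$, no points in $(0,m/N)$, and the remaining $N-m$ points spaced exactly $1/N$ in between. Then $D_{N}(t)=-m+O(1)$ for all $t\in(m/N,1-\eta)$, so $\|D_{N}\|_{L^{2}}^{2}\asymp m^{2}\asymp N$; on the other hand $\sum_{k\neq0}\frac{|S(k)|^{2}}{4\pi^{2}k^{2}}=\int_{0}^{1}\bigl|\sum_{j}(\{z_{j}-x\}-\tfrac12)\bigr|^{2}dx$, and the integrand is $O(1)$ away from a triangular excursion of height $\approx m$ and width $m/N$, so the weighted sum is $\asymp m^{3}/N\asymp\sqrt{N}$. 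In general the functions $t\mapsto\frac{1-e(-kt)}{2\pi ik}$ are not orthogonal; in $d=1$ one has exactly $\|D_{N}\|_{L^{2}}^{2}=\frac{1}{4\pi^{2}}\bigl|\sum_{k\neq0}S(k)/k\bigr|^{2}+\frac{1}{4\pi^{2}}\sum_{k\neq0}|S(k)|^{2}/k^{2}$, and it is the first, anchor-dependent term that your comparison discards illegitimately.

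The repair is short, because your Parseval computation really produces the \emph{translation-averaged} anchored discrepancy: integrating first in the anchor, $\sum_{\mathbf{k}\neq\mathbf{0}}\prod_{i}\max(1,|k_{i}|)^{-2}|S(\mathbf{k})|^{2}\asymp_{d}\int_{[0,1)^{d}}\int_{[0,1)^{d}}\bigl|\#\{j:z_{j}-x\in[0,s)\bmod 1\}-Ns_{1}\cdots s_{d}\bigr|^{2}\,ds\,dx=\int_{[0,1)^{d}}\|D_{N}^{(x)}\|_{L^{2}}^{2}\,dx$, where $D_{N}^{(x)}$ is the anchored discrepancy of the translated set $\{z_{j}-x\}$ (the cross terms in $\mathbf{k}$ vanish after the $x$-integration, and each one-dimensional factor integrates to $\frac{1}{2\pi^{2}k_{i}^{2}}$ or $\frac13$). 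Your Haar-function argument applies verbatim to every translate, giving $\|D_{N}^{(x)}\|_{L^{2}}^{2}\geqslant c\log^{d-1}N$ uniformly in $x$, and (\ref{R}) follows. So the architecture — single-radius weight bound plus a Roth-type lower bound for the translation-invariant weighted sum — is viable, but the step ``weighted sum dominates the star discrepancy of the given set'' must be replaced by this translation average (or by Montgomery's own lower bound for the weighted sum); as written, that step is wrong.
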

H. Davenport \cite{davenport} and K. Roth \cite{roth80} have shown that the lower bound in (\ref{R}) cannot be improved, see also \cite{chen15}. M. Drmota \cite{drmota1} has extended the above result from the cube to a convex polyhedron.

\medskip

Theorem \ref{SMBT} and Theorem \ref{Ro} are fundamental results in the
field termed  \textquotedblleft irregularities of distribution\textquotedblright \ or  \textquotedblleft geometric discrepancy\textquotedblright\  (see \cite{BC}, see also \cite{Bi,Cha,CST,DP,DT,matousek,travaglini}). Together,  the above two theorems show that  the discrepancy -- i.e.,  the comparison between the atomic measure associated with $z_1,  \ldots , z_N$ and the Lebesgue measure -- appears to vary significantly depending on the family of sets on which it is tested. We are interested in a geometric explanation for this different behavior.

\medskip

In this paper, we replace the ball $B$ with a convex body (that is a compact convex set with nonempty interior) $\Omega\subset
\mathbb{T}^{d}$ having a smooth boundary $\partial \Omega$ with finite order of contact at every point. Specifically, at each point of $\partial \Omega$, every one-dimensional tangent line has finite order of contact with $\partial \Omega$.  In particular, $\Omega$ has to be strictly convex.

Our main result is stated in the following theorem, which suggests that the smoothness of  $\partial \Omega$ accounts for the observed difference between the estimate in (\ref{SMB}) and the estimate in (\ref{R}). Incidentally we observe that our result does not require the curvature of $\partial \Omega$ to be positive everywhere. An analog of Theorem \ref{d3} for the planar case has already been proved in \cite{BT22}. See also  \cite{BCGGT,BT20,G2020} for earlier results.

\begin{theorem} \label{d3} Let $\Omega$ be a  convex body
in $\mathbb{T}^{d}$. Assume that $\partial\Omega$ is smooth with
finite order of contact at every point. Then there exists a positive constant $c$ such that for every choice of $N$ points $z_1, \ldots , z_N$
in $\mathbb{T}^{d}$ we have
\[
\int_{\mathbb{T}^{d}}\int_{0}^{1}\left|\sum_{j=1}^{N}\chi_{x+r\Omega}\left(z_{j}\right)-N r^d \left|\Omega\right|\right|^{2}dxdr\geqslant c\,  N^{1-1/d}.
\]
In particular, for every choice of $N$ points $z_1, \ldots , z_N$
in $\mathbb{T}^{d}$ there exists a dilated and translated copy $x+r\Omega$ of $\Omega$ with discrepancy
\[
\left|\sum_{j=1}^{N}\chi_{x+r\Omega}\left(z_{j}\right)-N r^d \left|\Omega\right|\right|\geqslant c\,  N^{1/2-1/2d}.
\]
\end{theorem}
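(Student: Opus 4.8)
The plan is to combine a Fourier-series expansion of the discrepancy on $\mathbb{T}^{d}$ with an exponential-sum lower bound of Cassels--Montgomery type, the bridge between the two being an averaged lower bound for the Fourier transform of $\chi_{\Omega}$ supplied by the Bruna--Nagel--Wainger stationary-phase analysis. Put $S(k)=\sum_{j=1}^{N}e^{2\pi i k\cdot z_{j}}$ for $k\in\mathbb{Z}^{d}$. Expanding the function $x\mapsto\sum_{j}\chi_{x+r\Omega}(z_{j})-Nr^{d}|\Omega|$ on $\mathbb{T}^{d}$ into a Fourier series, one checks (after the substitution $x\mapsto z_{j}-x$) that its $k$-th Fourier coefficient has modulus $|S(k)|\,r^{d}\,|\widehat{\chi_{\Omega}}(rk)|$ for $k\neq0$, while its constant term is cancelled by the subtracted $Nr^{d}|\Omega|$. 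Parseval in $x$ followed by integration in $r$ then gives
\[
\int_{\mathbb{T}^{d}}\int_{0}^{1}\Bigl|\sum_{j=1}^{N}\chi_{x+r\Omega}(z_{j})-Nr^{d}|\Omega|\Bigr|^{2}\,dx\,dr=\sum_{k\neq0}|S(k)|^{2}\,W(k),\qquad W(k):=\int_{0}^{1}r^{2d}\,\bigl|\widehat{\chi_{\Omega}}(rk)\bigr|^{2}\,dr,
\]
and the change of variable $t=r|k|$ rewrites this weight as $W(k)=|k|^{-2d-1}\int_{0}^{|k|}t^{2d}\,\bigl|\widehat{\chi_{\Omega}}(t\omega)\bigr|^{2}\,dt$, with $\omega=k/|k|$. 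Thus the theorem reduces to two independent estimates: a uniform lower bound $W(k)\geqslant c\,|k|^{-(d+1)}$ for every $k\in\mathbb{Z}^{d}\setminus\{0\}$, and a lower bound for the partial sums $\sum_{0<|k|\leqslant R}|S(k)|^{2}$.

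For the exponential sums I would invoke the Cassels--Montgomery lemma: fix a Schwartz function $\phi\geqslant0$ on $\mathbb{R}^{d}$ with $\widehat{\phi}\geqslant0$, with $\widehat{\phi}$ supported in the unit ball, and with $\phi(0)>0$. Poisson summation gives
\[
\sum_{k\in\mathbb{Z}^{d}}\widehat{\phi}(k/R)\,|S(k)|^{2}=R^{d}\sum_{i,j=1}^{N}\ \sum_{n\in\mathbb{Z}^{d}}\phi\bigl(R(z_{i}-z_{j}-n)\bigr)\ \geqslant\ R^{d}N\,\phi(0),
\]
because every summand is nonnegative and the diagonal terms $i=j$, $n=0$ contribute $R^{d}N\phi(0)$ by themselves. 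Since the $k=0$ term is $\widehat{\phi}(0)N^{2}$, since $0\leqslant\widehat{\phi}\leqslant\widehat{\phi}(0)$, and since $\widehat{\phi}$ vanishes outside $|\xi|\leqslant1$, the choice $R=C\,N^{1/d}$ with $C$ a large enough fixed constant yields $\sum_{0<|k|\leqslant R}|S(k)|^{2}\geqslant c_{0}\,N^{2}$. Granting the bound on $W(k)$, discarding from the Parseval identity the nonnegative terms with $|k|>R$ and using $W(k)\geqslant c\,|k|^{-(d+1)}\geqslant c\,R^{-(d+1)}$ on the rest, one obtains
\[
\int_{\mathbb{T}^{d}}\int_{0}^{1}\Bigl|\sum_{j=1}^{N}\chi_{x+r\Omega}(z_{j})-Nr^{d}|\Omega|\Bigr|^{2}\,dx\,dr\ \geqslant\ c\,R^{-(d+1)}\sum_{0<|k|\leqslant R}|S(k)|^{2}\ \geqslant\ c'\,N^{2-(d+1)/d}=c'\,N^{1-1/d},
\]
which is the asserted inequality; the concluding ``in particular'' follows because $\mathbb{T}^{d}\times(0,1)$ has total measure $1$, so an average $\geqslant c\,N^{1-1/d}$ forces the discrepancy to exceed $\sqrt{c}\,N^{1/2-1/(2d)}$ at some pair $(x,r)$.

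The crux, and the geometric inequality announced in the abstract, is the bound $W(k)\geqslant c\,|k|^{-(d+1)}$, which after the rescaling above amounts to
\[
\int_{0}^{T}t^{2d}\,\bigl|\widehat{\chi_{\Omega}}(t\omega)\bigr|^{2}\,dt\ \geqslant\ c\,T^{d}\qquad\text{for all }\omega\in S^{d-1}\text{ and all }T\geqslant1,
\]
and it is here that convexity, smoothness and finite order of contact enter. By the divergence theorem, $\widehat{\chi_{\Omega}}(t\omega)=-\frac{1}{2\pi i t}\int_{\partial\Omega}\bigl(\omega\cdot\nu(x)\bigr)\,e^{-2\pi i t\,\omega\cdot x}\,d\sigma(x)$, an oscillatory surface integral whose phase is stationary exactly at the two boundary points $x_{\pm}$ having outer unit normal $\pm\omega$ --- exactly two, because finite order of contact forces strict convexity. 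Applying the method of stationary phase, and its refinement by the Bruna--Nagel--Wainger theorem in the directions where the Gaussian curvature degenerates, one gets an asymptotic expansion of $\widehat{\chi_{\Omega}}(t\omega)$ whose leading term is a sum of two oscillatory contributions of frequencies $\omega\cdot x_{+}$ and $\omega\cdot x_{-}$; these frequencies differ by the width of $\Omega$ in the direction $\omega$, a strictly positive quantity, so the two contributions cannot cancel, while the modulus of each amplitude is comparable to a negative power of the curvature (and only larger where the curvature vanishes), hence bounded below uniformly in $\omega$ because $\partial\Omega$ is a fixed compact smooth manifold. Integrating $t^{2d}\,|\widehat{\chi_{\Omega}}(t\omega)|^{2}$ against $dt$ then averages out both the oscillation and the remainder terms, leaving a quantity $\gtrsim T^{d}$. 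I expect the main obstacle to be precisely this last step performed \emph{uniformly in the direction} $\omega$: one needs a stationary-phase/Bruna--Nagel--Wainger expansion with a uniform lower bound on the principal amplitude and uniform control of the errors, so as to pass smoothly between the elliptic directions, where $|\widehat{\chi_{\Omega}}(t\omega)|\asymp t^{-(d+1)/2}$, and the degenerate ones, where it decays only like $t^{-1-1/m}$ with $m$ the order of contact --- in neither regime may the lower bound be lost.
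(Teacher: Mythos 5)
Your outer framework is exactly the paper's: expand the discrepancy in a Fourier series, apply Parseval to get $\sum_{k\neq 0}|S(k)|^{2}W(k)$, bound the exponential sums below by a Cassels--Montgomery lemma with $R\sim N^{1/d}$, and reduce everything to the uniform directional bound $W(k)\geqslant c\,|k|^{-(d+1)}$, i.e.\ to
\[
\int_{\alpha\rho\leqslant s\leqslant\rho} \bigl|\widehat{\chi_{\Omega}}(s\Theta)\bigr|^{2}\,ds\;\geqslant\; c\,\rho^{-d}\qquad\text{uniformly in }\Theta .
\]
Up to that point the argument is sound (your treatment of the small frequencies $|k|\leqslant H$ differs only cosmetically from the paper's, which excludes them in the Cassels--Montgomery lemma instead).

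The genuine gap is in the crux itself. You propose to obtain the displayed bound from ``an asymptotic expansion of $\widehat{\chi_{\Omega}}(t\omega)$ whose leading term is a sum of two oscillatory contributions \dots\ with the modulus of each amplitude bounded below uniformly in $\omega$,'' invoking stationary phase refined by Bruna--Nagel--Wainger in the degenerate directions. But the Bruna--Nagel--Wainger theorem is only an \emph{upper} bound (by the measure of spherical caps); it supplies no asymptotic expansion and no lower bound on a principal amplitude. The two-term expansion with computable amplitudes (Theorem \ref{Herz}) is available only where the Gaussian curvature is bounded away from zero, and its error term is not uniform as the curvature degenerates, so ``passing smoothly between the elliptic and the degenerate directions'' is precisely what is unproved --- as you yourself concede at the end. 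This is not a routine technicality: it is the main geometric content of the paper (Theorem \ref{thm:da sotto}, described there as a converse of the Bruna--Nagel--Wainger estimate), and its proof goes a completely different way. One writes $\widehat{\chi_{\Omega}}(s\Theta)=\widehat{A_{\Omega,\Theta}}(s)$ for the parallel section function, applies a finite difference of order $d$ with step $h=\rho^{-1}$ and one-dimensional Parseval, uses the Bruna--Nagel--Wainger \emph{upper} bound together with Brunn's theorem (concavity of $A_{\Omega,\Theta}^{1/(d-1)}$) to show that the frequencies outside $[\alpha\rho,\beta\rho]$ contribute only a small multiple of $\rho^{-1}\bigl[A_{\Omega,\Theta}(-a+\rho^{-1})^{2}+A_{\Omega,\Theta}(b-\rho^{-1})^{2}\bigr]$, observes that near the endpoints of the support the finite difference equals $A_{\Omega,\Theta}$ itself, which gives the matching lower bound on $\int|\Delta_{h}^{d}A_{\Omega,\Theta}|^{2}$, and finally converts $A_{\Omega,\Theta}(\pm)\geqslant c\rho^{-(d-1)/2}$ via a uniform interior tangent ball (tubular neighbourhood argument, Lemma \ref{lem:uniform ball condition}). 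Without some argument of this kind (or another rigorous substitute), your step ``hence bounded below uniformly in $\omega$'' is an assertion of the theorem's hardest ingredient rather than a proof of it.
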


The proof of Theorem \ref{d3} depends on suitable  geometric estimates for the Fourier transform $\widehat{\chi_{\Omega}}$, see Section \ref{due}, and on (a variation of) a lemma of Cassels and Montgomery (see \cite{montgomery94},
see also \cite{BBG} for a more general result).

\section{Geometric estimates for the Fourier transform} \label{due}
The following estimate is classical (see e.g. \cite{G-G-V}, \cite{H} or \cite{Hl}).
\begin{theorem}\label{Herz}
Let $\Omega\subset\mathbb{R}^{d}$ be a convex body with smooth boundary having everywhere positive Gaussian curvature. For every $\xi\in\mathbb{R}^{d}\setminus\left\{ 0\right\}$  let 
$p\left(\xi\right)$ be the unique point on the boundary $\partial\Omega$  with outward unit normal $\xi/\left\vert \xi\right\vert$. Also let $K\left(\xi\right)$ be the Gaussian curvature of $\partial\Omega$  at $p\left(\xi\right)$. Then, as $\left\vert \xi\right\vert \rightarrow+\infty$, the Fourier transform of $\chi_{\Omega}\left(x\right)$ has the asymptotic expansion	
\begin{align*}
& \widehat{\chi_{\Omega}}\left(\xi\right) =\\
 &	\frac{-1}{2\pi i}\left\vert \xi\right\vert ^{-\frac{d+1}{2}}\left[K^{-\frac{1}{2}}\left(\xi\right)e^{-2\pi i\left(\xi\cdot p\left(\xi\right)-\frac{d-1}{8}\right)}-K^{-\frac{1}{2}}\left(-\xi\right)e^{-2\pi i\left(\xi\cdot p\left(-\xi\right)+\frac{d-1}{8}\right)}\right] \\
	 & +O\left(\left\vert \xi\right\vert ^{-\frac{d+3}{2}}\right).
\end{align*}
\end{theorem}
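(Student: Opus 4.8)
The plan is to turn $\widehat{\chi_{\Omega}}$ into an oscillatory surface integral over $\partial\Omega$ and then run the method of stationary phase. Since $\nabla_{x}e^{-2\pi i\xi\cdot x}=-2\pi i\,\xi\,e^{-2\pi i\xi\cdot x}$, one writes the integrand of $\int_{\Omega}e^{-2\pi i\xi\cdot x}\,dx$ as a divergence, $e^{-2\pi i\xi\cdot x}=\operatorname{div}_{x}\!\left(\frac{i\,\xi}{2\pi\left|\xi\right|^{2}}\,e^{-2\pi i\xi\cdot x}\right)$, so that the divergence theorem gives
\[
\widehat{\chi_{\Omega}}\left(\xi\right)=\frac{i}{2\pi\left|\xi\right|^{2}}\int_{\partial\Omega}\left(\xi\cdot\nu\left(x\right)\right)e^{-2\pi i\xi\cdot x}\,d\sigma\left(x\right),
\]
where $\nu$ is the outward unit normal and $d\sigma$ is surface measure. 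Writing $\xi=\left|\xi\right|\theta$ with $\left|\theta\right|=1$, the phase $x\mapsto-2\pi\left|\xi\right|\,\theta\cdot x$ restricted to $\partial\Omega$ has critical points exactly at the points whose outward normal is $\pm\theta$; strict convexity (guaranteed by the positive Gaussian curvature) forces there to be precisely two such points, namely $p\left(\xi\right)$ and $p\left(-\xi\right)$, and they depend smoothly on $\theta$ through the Gauss map.

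The next step is to localize with a smooth partition of unity on $\partial\Omega$ subordinate to a neighborhood of $p\left(\xi\right)$, a neighborhood of $p\left(-\xi\right)$, and the complement of slightly smaller such neighborhoods. On the last piece the phase gradient is bounded away from zero, so repeated integration by parts (non-stationary phase) makes that contribution $O\left(\left|\xi\right|^{-M}\right)$ for every $M$, and after the $\left|\xi\right|^{-2}$ prefactor it is absorbed in the error. On a neighborhood of $p\left(\xi\right)$ one picks coordinates in which $\partial\Omega$ is the graph $x_{d}=\varphi\left(x'\right)$, $x'\in\mathbb{R}^{d-1}$, over its tangent hyperplane at $p\left(\xi\right)$, so $\nabla\varphi\left(0\right)=0$. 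Then $\theta\cdot x$ restricted to the surface is, as a function of $x'$, a constant plus $\pm\varphi\left(x'\right)$; its Hessian at the critical point $x'=0$ is $\pm\operatorname{Hess}\varphi\left(0\right)$; the standard relation between the second fundamental form in graph coordinates and the Gaussian curvature gives $\left|\det\operatorname{Hess}\varphi\left(0\right)\right|=K\left(\xi\right)$; and convexity shows that $\operatorname{Hess}\varphi\left(0\right)$ is definite, which pins down the signature to be $\pm\left(d-1\right)$. The amplitude $\xi\cdot\nu\left(x\right)$ equals $\left|\xi\right|$ at $p\left(\xi\right)$ and $-\left|\xi\right|$ at $p\left(-\xi\right)$, which is what produces the minus sign between the two terms in the statement.

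Applying the stationary phase expansion with a non-degenerate critical point, the leading term at $p\left(\xi\right)$ is $\bigl(2\pi/(2\pi\left|\xi\right|)\bigr)^{(d-1)/2}K^{-1/2}\left(\xi\right)$ times the signature factor $e^{\pm i\pi(d-1)/4}$ times $e^{-2\pi i\xi\cdot p\left(\xi\right)}$ times the amplitude value $\left|\xi\right|$, and analogously at $p\left(-\xi\right)$. Collecting powers, $\left|\xi\right|^{-2}\cdot\left|\xi\right|\cdot\left|\xi\right|^{-(d-1)/2}=\left|\xi\right|^{-(d+1)/2}$; collecting the constants and exponentials — this is where one checks that the orientation of $\nu$ and the sign of $\operatorname{Hess}\varphi$ at the two points produce the $-\tfrac{d-1}{8}$ at $p\left(\xi\right)$ and the $+\tfrac{d-1}{8}$ at $p\left(-\xi\right)$, and that $\tfrac{i}{2\pi}=\tfrac{-1}{2\pi i}$ — yields exactly the bracketed expression. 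The next term in the stationary phase expansion is one power of $\left|\xi\right|^{-1}$ smaller, which accounts for the $O\left(\left|\xi\right|^{-(d+3)/2}\right)$ remainder; uniformity in $\theta$ follows from compactness of $\partial\Omega$ together with the uniform lower bound on $\left|\det\operatorname{Hess}\varphi\right|$ furnished by everywhere positive curvature.

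The analytic ingredients — localization, non-stationary phase, and stationary phase with a non-degenerate Hessian — are routine, so the main obstacle is the sign and phase bookkeeping needed to land on the precise constants $\tfrac{-1}{2\pi i}$ and $\mp\tfrac{d-1}{8}$: one must track carefully the orientation of $\nu$, the definiteness (hence signature) of $\operatorname{Hess}\varphi$ at each of $p\left(\xi\right)$ and $p\left(-\xi\right)$, and the value of the phase at those points. An equivalent route is to reduce to one dimension: with $f\left(t\right)$ the $(d-1)$-dimensional volume of the slice $\Omega\cap\{x\cdot\theta=t\}$ one has $\widehat{\chi_{\Omega}}\left(\xi\right)=\int f\left(t\right)e^{-2\pi i\left|\xi\right|t}\,dt$, and near the two endpoints of the support $f$ has an algebraic singularity of order $(d-1)/2$ whose leading coefficient is a multiple of $K^{-1/2}$; the asymptotics then follow from the classical endpoint-contribution lemma for oscillatory integrals.
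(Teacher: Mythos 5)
The paper does not prove this theorem: it is introduced as ``classical'' with citations to Gelfand--Graev--Vilenkin, Herz, and Hlawka, so there is no internal proof to compare against. Your sketch is precisely the standard stationary-phase derivation that those references rely on, and it is correct in outline. The divergence-theorem reduction to a boundary integral, the identification of the two critical points $p(\pm\xi)$ via strict convexity and the Gauss map, the non-stationary-phase decay away from them, and the power count $\left|\xi\right|^{-2}\cdot\left|\xi\right|\cdot\left|\xi\right|^{-(d-1)/2}=\left|\xi\right|^{-(d+1)/2}$ are all right. The bookkeeping you flag as delicate does indeed close: writing the phase as $\phi=-\theta\cdot x$ on $\partial\Omega$, at $p(\xi)$ the function $\theta\cdot x$ has a strict maximum (the body lies in a half-space), so $\mathrm{Hess}\,\phi$ is positive definite with $\left|\det\right|=K(\xi)$, the signature factor is $e^{i\pi(d-1)/4}=e^{2\pi i(d-1)/8}$, the amplitude $\xi\cdot\nu$ equals $+\left|\xi\right|$, and $i/(2\pi)=-1/(2\pi i)$; at $p(-\xi)$ the Hessian is negative definite and the amplitude is $-\left|\xi\right|$, which simultaneously produces the minus sign between the two bracketed terms and the conjugate signature factor $e^{-i\pi(d-1)/4}$. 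The alternative one-dimensional route you mention at the end, passing through the endpoint asymptotics of the parallel-section function $A_{\Omega,\Theta}$, is also a legitimate classical derivation, and it is in fact the one most closely aligned with the machinery the rest of the paper actually uses (compare \eqref{Chi=A} and Theorem~\ref{Brunn}).
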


In the directions where the Gaussian curvature vanishes, the order of decay of the Fourier transform changes. 
Nevertheless, in \cite{BNW},  J. Bruna, A. Nagel, and S. Wainger  provided the following elegant uniform geometric estimate of the Fourier transform by replacing the curvature with the size of spherical caps on $\partial\Omega$.

\begin{theorem} \label{BNWtheorem}
Let $d \geqslant 2$ and let $\Omega \subset \mathbb{R}^d$ be a convex body whose boundary $\partial \Omega$ has finite order of contact at every point. Let $d\sigma$ be the measure induced  on $\partial \Omega$ by the Lebesgue measure and let $\eta$ be a smooth function on $\partial\Omega$. Let $\Theta \in \mathbb{R}^d$, $\left\vert \Theta \right\vert =1$, and let $x_1, x_2$ be the two points in $\partial \Omega$ where the tangent planes \ $T_{x_{1}}, T_{x_{2}} $ are orthogonal to $\Theta$. For every  $\rho>0$ and $j=1,2$, we consider the \textquotedblleft spherical cap\textquotedblright 
\[
B_{x_j, \rho^{-1}}=\left\{y\in \partial \Omega : \mathrm{dist}\left(y,T_{x_{j}}\right) < \rho^{-1} \right\} \ ,
\]
and its  surface measure $\sigma\left( B_{x_j, \rho^{-1}} \right)$.  Then
\[
\left\vert \int_{\partial \Omega} \eta(x) e^{-2\pi i \rho \Theta \cdot x} \ d\sigma(x)  \right\vert \leqslant c \Big(  \sigma\left( B_{x_1, \rho^{-1}} \right) + \sigma\left( B_{x_2, \rho^{-1}} \right) \Big) \ , 
\]
where $c $ only depends on $\Omega$ and $\eta$.
\end{theorem}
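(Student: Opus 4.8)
The plan is to localise the oscillatory integral near the two ``stationary'' points $x_1,x_2$ and, at each of them, reduce it to a single--amplitude oscillatory integral that is controlled by the measure of a sublevel set of the local graphing function of $\partial\Omega$, this sublevel set being precisely the cap $B_{x_j,\rho^{-1}}$. Throughout we may assume $\rho$ large (the estimate being trivial otherwise), and we use that by compactness of $\partial\Omega$ the order of contact is bounded by a single integer $m$ uniformly on $\partial\Omega$; this uniformity is what keeps the constant dependent only on $\Omega$ and $\eta$. First I would fix a small $\varepsilon>0$ and split $\eta=\eta_0+\eta_1+\eta_2$ by a smooth partition of unity with $\operatorname{supp}\eta_j$ inside the $\varepsilon$--neighbourhood of $x_j$ for $j=1,2$ and $\operatorname{supp}\eta_0$ disjoint from $\{x_1,x_2\}$. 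The critical points of $x\mapsto\Theta\cdot x$ on $\partial\Omega$ are exactly the points whose unit normal is $\pm\Theta$, that is $x_1$ and $x_2$, so the tangential gradient of $\Theta\cdot x$ is bounded below on the compact set $\operatorname{supp}\eta_0$; repeated integration by parts along the associated tangential vector field then bounds the $\eta_0$--piece by $C_M\rho^{-M}$ for every $M$. Since by smoothness $\sigma(B_{x_j,\rho^{-1}})\gtrsim\rho^{-(d-1)/2}$, taking $M$ large absorbs the $\eta_0$--piece, and it remains to bound the $\eta_j$--piece by $\sigma(B_{x_j,\rho^{-1}})$, $j=1,2$.

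Next, near $x_j$ I would choose Euclidean coordinates with $x_j=0$, $T_{x_j}=\{y_d=0\}$, $\Omega\subset\{y_d\geqslant0\}$ locally and $\Theta=\pm e_d$, so that $\partial\Omega$ is the graph $y_d=\phi_j(y')$ with $\phi_j$ smooth, convex, $\phi_j(0)=0$, $\nabla\phi_j(0)=0$ and of order of contact $\leqslant m$ at $0$ in every direction; writing $d\sigma=\sqrt{1+|\nabla\phi_j|^2}\,dy'$ turns the $\eta_j$--piece into $\int_{\mathbb{R}^{d-1}}a_j(y')e^{\mp2\pi i\rho\phi_j(y')}\,dy'$ with $a_j$ smooth and compactly supported, while $\sigma(B_{x_j,\rho^{-1}})\sim|\{y':\phi_j(y')<\rho^{-1}\}|$ since the Jacobian factor is bounded above and below. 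So the theorem reduces to the following model estimate (with $n=d-1$, $\lambda=2\pi\rho$): if $\phi$ is smooth and convex near $0\in\mathbb{R}^n$ with $\phi(0)=0$, $\nabla\phi(0)=0$ and order of contact $\leqslant m$ at $0$, and $a\in C^\infty_c$, then $|\int a\,e^{i\lambda\phi}|\leqslant C|\{\phi<\lambda^{-1}\}|$, with $C$ depending only on $m$, $\operatorname{supp}a$ and finitely many derivatives of $\phi,a$. For $n=1$ this is elementary: splitting at $0$, on $(0,\varepsilon)$ convexity and finite contact force $\phi'>0$, so substituting $u=\phi(y)$ the integral becomes $\int_0^{\phi(\varepsilon)}\frac{a(t(u))}{\phi'(t(u))}e^{i\lambda u}\,du$; on $[0,\lambda^{-1}]$ a trivial bound gives $\|a\|_\infty t(\lambda^{-1})$, while on $[\lambda^{-1},\phi(\varepsilon)]$ the convexity inequality $\phi'(t)\geqslant\phi(t)/t$ gives $1/\phi'(t(\lambda^{-1}))\leqslant\lambda t(\lambda^{-1})$, which with the monotonicity of $u\mapsto1/\phi'(t(u))$ bounds the sup and the total variation of $g(u)=a(t(u))/\phi'(t(u))$ by $C\lambda t(\lambda^{-1})$, whence $|\int g\,e^{i\lambda u}|\leqslant\lambda^{-1}(2\|g\|_\infty+\operatorname{Var}g)\leqslant C\,t(\lambda^{-1})=C\,|\{0<y<\varepsilon:\phi<\lambda^{-1}\}|$; the interval $(-\varepsilon,0)$ is symmetric.

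For $n\geqslant2$ I would decompose dyadically, adapted to the convex sublevel sets $A_t=\{\phi<t\}$, which shrink to $\{0\}$ by strict convexity and satisfy $A_{2t}\subset2A_t$ --- immediate from convexity and $\phi(0)=0$ --- hence $|A_{2t}|\leqslant2^n|A_t|$. With $\psi\in C^\infty_c(1/2,2)$, $\sum_k\psi(2^ks)=1$ for $s>0$, write $\int a\,e^{i\lambda\phi}=\sum_k I_k$, $I_k=\int a\,\psi(2^k\phi)\,e^{i\lambda\phi}$, supported in the corona $C_k=\{2^{-k-1}<\phi<2^{1-k}\}$, with $\sum_k\chi_{C_k}\leqslant2$. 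For $2^{-k}\leqslant\lambda^{-1}$ the trivial bound $|I_k|\leqslant\|a\|_\infty|C_k|$ suffices, since $\bigcup_{2^{-k}\leqslant\lambda^{-1}}C_k\subset A_{2\lambda^{-1}}\subset2A_{\lambda^{-1}}$ forces these to sum to $\leqslant C|A_{\lambda^{-1}}|$. For $2^{-k}>\lambda^{-1}$, where the phase has size $\mu_k:=\lambda2^{-k}\gg1$ on $C_k$, the plan is to prove $|I_k|\leqslant C_N\mu_k^{-N}|A_{2^{-k}}|$ for every $N$: by John's theorem choose a box with $A_{2^{-k}}\subset P_k\subset c_nA_{2^{-k}}$, rescale $P_k$ to the unit cube, and integrate by parts non-isotropically in the rescaled variable --- there the phase equals $\mu_k$ times a normalised convex function $\Phi_k$ which on the rescaled corona is comparable to $1$, has gradient bounded below (from $\Phi_k(s)-\Phi_k(s^\ast)\leqslant\nabla\Phi_k(s)\cdot(s-s^\ast)$ with $s^\ast$ the image of $0$, since $\Phi_k(s^\ast)=0$), and has all derivatives bounded above, so each integration by parts saves a factor $\mu_k^{-1}$. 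Since $|A_{2^{-k}}|\leqslant2^{n(k_\lambda-k)}|A_{\lambda^{-1}}|$ by doubling and $\mu_k\sim2^{k_\lambda-k}$ (with $2^{-k_\lambda}\sim\lambda^{-1}$), taking $N>n$ gives $\sum_{2^{-k}>\lambda^{-1}}|I_k|\leqslant C\sum_{j\geqslant1}2^{(n-N)j}|A_{\lambda^{-1}}|\leqslant C|A_{\lambda^{-1}}|$. Adding the two ranges, and using the doubling of $|A_t|$ to replace $\lambda^{-1}$ by $\rho^{-1}$, yields $|\int a_j\,e^{\mp2\pi i\rho\phi_j}|\leqslant C\,\sigma(B_{x_j,\rho^{-1}})$, as required.

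The main obstacle is the uniform control of the derivatives of the normalised phase: one has to show that after the anisotropic rescaling that normalises $A_{2^{-k}}$, the function $2^k\phi$ has derivatives of every fixed order bounded by constants independent of $k$ and of the base point on $\partial\Omega$. This is exactly where the finite order of contact is used quantitatively --- it is what prevents the adapted boxes $P_k$ from becoming too eccentric between consecutive dyadic scales --- and where compactness of $\partial\Omega$ upgrades ``finite at each point'' to a uniform bound, so that the constant in the theorem depends only on $\Omega$ and $\eta$. Once this normalised--derivative control is in hand, the remaining work is the bookkeeping sketched above; in \cite{BNW} this is the technical heart of the argument.
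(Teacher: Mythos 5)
Your first paragraph is, in substance, all the paper itself does for this statement: the paper takes the \emph{localized} estimate (amplitude supported near the point of $\partial\Omega$ with normal $\pm\Theta$) as the theorem of Bruna, Nagel and Wainger \cite{BNW}, and only observes that the form stated here follows by a partition of unity — exactly your splitting $\eta=\eta_0+\eta_1+\eta_2$, the non-stationary-phase bound $C_M\rho^{-M}$ for the $\eta_0$ piece, and its absorption via the uniform interior tangent ball bound $\sigma(B_{x_j,\rho^{-1}})\geqslant c\rho^{-(d-1)/2}$. Had you stopped there and quoted the localized BNW estimate as a black box, your argument would coincide with the paper's (modulo checking that all constants are uniform in $\Theta$, which compactness gives).

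The remaining two paragraphs, which attempt to reprove the localized estimate itself, contain a genuine gap. The entire treatment of the coronas with $2^{-k}>\lambda^{-1}$ rests on the claim that, after rescaling the John box of $A_{2^{-k}}=\{\phi<2^{-k}\}$ to the unit cube, the normalized phase $\Phi_k=2^{k}\phi$ has all (or at least sufficiently many) derivatives bounded above, uniformly in $k$, in the boundary point, and in $\Theta$: repeated integration by parts with $L=(i\mu_k|\nabla\Phi_k|^{2})^{-1}\nabla\Phi_k\cdot\nabla$ requires bounds on the second and higher derivatives of $\Phi_k$ (also through the cutoff $\psi(\Phi_k)$), not just the gradient lower bound you correctly extract from convexity. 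Convexity plus pointwise finite order of contact gives no routine control of higher derivatives under an anisotropic rescaling — an upper bound on a convex function controls its gradient on compact subsets but says nothing about its Hessian or beyond — and the uniformity in $k$ and in the base point is precisely the hard quantitative content of the finite-type hypothesis. You flag this as the ``main obstacle'' and defer to \cite{BNW}, but that citation does not close it: Bruna, Nagel and Wainger do not prove such a normalized-derivative lemma; their published argument controls the localized oscillatory integral by cap measures through a differently organized proof, so it cannot be invoked to supply the specific estimate your decomposition needs. Thus either this lemma must be proved (and that is essentially the whole difficulty of the theorem), or the localized BNW estimate should simply be cited, in which case your first paragraph already completes the argument and the dyadic construction is unnecessary.
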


In fact, this is not exactly the form in which Bruna, Nagel, and Wainger state their result, as they use a cutoff function of suitable small support to isolate the critical points of the oscillatory integral. 
Our statement follows from theirs by a suitable partition of unity.

We remark that in the case $d=2$ the above result holds if $\partial\Omega$ is a smooth convex curve without assuming finite order of contact. See Section 6 in \cite{BNW}. 

\medskip
As an immediate consequence of the result of Bruna, Nagel, and Wainger, one obtains an estimate for the Fourier transform of the characteristic function of $\Omega$, where the spherical caps are replaced with the \textquotedblleft parallel section functions\textquotedblright \ introduced in  the following definition. 
\begin{definition}
Let $\Omega \subset \mathbb{R}^d$ be a convex body. Let $\Theta \in \mathbb{R}^d$,  $|\Theta|=1$. For $t \in \mathbb{R}$ the function 
\[
A_{\Omega , \Theta} (t)= \mathrm{Vol}_{d-1} \left(\Omega \cap \left\{\Theta^{\perp} +t\Theta\right\}  \right)
 \]
is  the parallel section function of $\Omega$ (that is the Radon transform)  in the direction $\Theta$ (here $\Theta^{\perp}=\left\{x\in \mathbb{R}^d: x \cdot \Theta =0 \right\}$).

Up to a translation of $\Omega$, we can assume that the origin is an interior point of  $\Omega$, so that for every direction $\Theta$ the parallel section function $A_{\Omega,\Theta}$ has support in an interval $\left[-a,b\right]$, where $a=a(\Theta)$ and $b=b(\Theta)$ are positive functions. 
\end{definition}

\begin{corollary} \label{BNW1}
Let $d \geqslant 2$ and let $\Omega \subset \mathbb{R}^d$ be a convex body whose boundary $\partial \Omega$ has finite order of contact at every point. Then there exist positive constants $c$ and $\kappa$ such that for every $\rho\geqslant\kappa$, and every direction $\Theta$ we have
\begin{equation*}
\left\vert\widehat{\chi}_{\Omega}(\rho \Theta) \right\vert \leqslant c  \ \rho^{-1} \left(A_{\Omega , \Theta} (-a+\rho^{-1})+A_{\Omega , \Theta} (b-\rho^{-1})\right) \ .
\end{equation*}
\end{corollary}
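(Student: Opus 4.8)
The plan is to pass from $\widehat{\chi}_{\Omega}$ to a surface integral via the divergence theorem, apply Theorem \ref{BNWtheorem}, and then rewrite the two spherical caps in terms of parallel sections of $\Omega$.

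First I would apply the divergence theorem to the vector field $V(x)=\Theta\,e^{-2\pi i\rho\Theta\cdot x}$, whose divergence is $-2\pi i\rho\,e^{-2\pi i\rho\Theta\cdot x}$ because $|\Theta|=1$. Writing $n(x)$ for the outward unit normal on $\partial\Omega$, this gives
\[
\widehat{\chi}_{\Omega}(\rho\Theta)=\int_{\Omega}e^{-2\pi i\rho\Theta\cdot x}\,dx=\frac{-1}{2\pi i\rho}\int_{\partial\Omega}\bigl(\Theta\cdot n(x)\bigr)\,e^{-2\pi i\rho\Theta\cdot x}\,d\sigma(x).
\]
Now I would invoke Theorem \ref{BNWtheorem} with $\eta(x)=\Theta\cdot n(x)$. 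Since $|\Theta|=1$, every $C^{k}$ norm of $\eta$ on $\partial\Omega$ is dominated by the corresponding norm of $n$, so the constant delivered by Theorem \ref{BNWtheorem} (which depends on $\eta$ only through finitely many derivatives) may be chosen independent of $\Theta$. Denoting by $x_{1}$ the boundary point with outward normal $-\Theta$ (the ``lowest'' point, corresponding to $t=-a$) and by $x_{2}$ the one with outward normal $\Theta$ (the ``highest'' point, $t=b$), this yields
\[
\bigl|\widehat{\chi}_{\Omega}(\rho\Theta)\bigr|\leqslant\frac{c}{\rho}\Bigl(\sigma\bigl(B_{x_{1},\rho^{-1}}\bigr)+\sigma\bigl(B_{x_{2},\rho^{-1}}\bigr)\Bigr).
\]

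Next I would estimate each cap by the appropriate value of $A_{\Omega,\Theta}$; consider $B_{x_{1},\rho^{-1}}$, the other being symmetric. Choosing coordinates with $\Theta=e_{d}$ and writing the lower portion of $\partial\Omega$ as a graph $s=\phi(z)$, $z\in\mathbb{R}^{d-1}$, with $\phi$ convex and $\min\phi=-a$ attained at $z_{1}$ (the projection of $x_{1}$ onto $\Theta^{\perp}$), one has $B_{x_{1},\rho^{-1}}=\{(z,\phi(z)):\phi(z)<-a+\rho^{-1}\}$, and since the outward normal there is $(\nabla\phi,-1)/\sqrt{1+|\nabla\phi|^{2}}$,
\[
\sigma\bigl(B_{x_{1},\rho^{-1}}\bigr)=\int_{\{\phi<-a+\rho^{-1}\}}\sqrt{1+|\nabla\phi(z)|^{2}}\;dz.
\]
By smoothness and compactness of $\partial\Omega$ together with the finite order of contact (hence strict convexity), there is a $\kappa$, independent of $\Theta$, such that for $\rho\geqslant\kappa$ the set $\{\phi<-a+\rho^{-1}\}$ lies in a small neighbourhood of $z_{1}$ on which $|\nabla\phi|\leqslant 1$; moreover on that neighbourhood the upper boundary $s=\psi(z)$ satisfies $\psi(z)>-a+\rho^{-1}$, because strict convexity forces the supporting hyperplane $T_{x_{1}}$ to meet $\partial\Omega$ only at $x_{1}$, so the inward normal ray enters the interior and $\psi(z_{1})>-a=\phi(z_{1})$, and a compactness argument in $\Theta$ makes this separation uniform. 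Consequently $\{\phi<-a+\rho^{-1}\}$ agrees, up to a null set, with $\{z:(z,-a+\rho^{-1})\in\Omega\}$, whose $(d-1)$-measure is $A_{\Omega,\Theta}(-a+\rho^{-1})$; therefore $\sigma(B_{x_{1},\rho^{-1}})\leqslant\sqrt{2}\,A_{\Omega,\Theta}(-a+\rho^{-1})$, and likewise $\sigma(B_{x_{2},\rho^{-1}})\leqslant\sqrt{2}\,A_{\Omega,\Theta}(b-\rho^{-1})$. Substituting into the previous display, and enlarging $c$ by $\sqrt{2}$, proves the corollary for all $\rho\geqslant\kappa$.

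The only delicate point is the uniformity in $\Theta$ of this last step: one needs, with constants independent of the direction, both that the relevant caps are graphs over $\Theta^{\perp}$ with gradient tending to $0$, and that the hyperplane section at height $-a+\rho^{-1}$ (resp.\ $b-\rho^{-1}$) really captures the whole cap — equivalently, that $\Omega$ extends a uniformly positive amount ``inward'' from each of its extreme points in every direction. These are quantitative manifestations of smoothness, compactness, and finite order of contact, of the same nature as those already underlying Theorem \ref{BNWtheorem}, so no new ingredient is required; the divergence-theorem step and the application of Theorem \ref{BNWtheorem} are routine.
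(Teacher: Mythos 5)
Your proposal is correct and follows essentially the same route as the paper: the divergence theorem reduces $\widehat{\chi}_{\Omega}(\rho\Theta)$ to a boundary oscillatory integral with an extra factor $\rho^{-1}$, Theorem \ref{BNWtheorem} bounds that integral by the two cap measures, and the caps are then compared with the parallel section function (a comparison the paper states in one line and you spell out, correctly, via the graph representation and the uniform separation coming from strict convexity and compactness). The one small divergence is that the paper writes $\Theta\cdot n(x)=\sum_{h}\Theta_{h}n_{h}(x)$ and applies Theorem \ref{BNWtheorem} to the fixed, $\Theta$-independent functions $\eta=n_{h}$, so uniformity of the constant in $\Theta$ is immediate from the statement as given, whereas your choice $\eta=\Theta\cdot n$ relies on the (true, but not contained in the quoted statement) fact that the Bruna--Nagel--Wainger constant depends on $\eta$ only through finitely many derivative norms.
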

Observe that when the Gaussian curvature of $\partial\Omega$ is positive, then 
$A_{\Omega , \Theta} (-a+\rho^{-1})$ and $A_{\Omega , \Theta} (b-\rho^{-1})$ are of the order of 
$\rho^{-(d-1)/2}$,  giving the well known estimate for a body with positive curvature
\[
\left\vert\widehat{\chi}_{\Omega}(\rho \Theta) \right\vert \leqslant c \rho^{-(d+1)/2},
\]
that follows from Theorem \ref{Herz}. 
\begin{proof}[Proof of Corollary \ref{BNW1}] Let $x \in \partial \Omega$ and let $n(x)$ be the outward unit normal at $x$. Then the divergence theorem yields
\begin{align*}
\widehat{\chi_{\Omega}}\left(\rho\Theta\right)= & \int_{\Omega}e^{-2\pi i\rho\Theta\cdot x}dx =  -\frac{1}{4\pi^{2}\rho^{2}}\int_{\Omega}\Delta\left(e^{-2\pi i\rho\Theta\cdot x}\right)dx\\
= & -\frac{1}{4\pi^{2}\rho^{2}}\int_{\partial\Omega}\nabla\left(e^{-2\pi i\rho\Theta\cdot x}\right)\cdot n\left(x\right)dx\\
= & \frac{i}{2\pi \rho}\int_{\partial\Omega}e^{-2\pi i\rho\Theta\cdot x} \ \Theta\cdot n\left(x\right)d\sigma\left(x\right)\\
= & \frac{i}{2\pi \rho}\sum_{h=1}^{d}\Theta_{h}\int_{\partial\Omega}e^{-2\pi i\rho\Theta\cdot x}n_{h}\left(x\right)d\sigma\left(x\right).
\end{align*}
Then the corollary follows from Theorem \ref{BNWtheorem} since the surface measure of the spherical cap and the parallel section function are comparable.
\end{proof}

A. Podkorytov (see \cite{Pod91}, see also \cite{BRT}) proved that, when $d=2$, the above corollary also holds for a convex domain $\Omega$, without  smoothness  assumptions on $\partial\Omega$. 
On the other hand, in dimension $d \geqslant 3$ convexity alone is not enough and some smoothness of $\partial \Omega$ seems to be necessary. Indeed, consider for example the double cone 
\[
\Omega = \left\{ (x,y,z) \in \mathbb{R}^3: \sqrt{y^2+z^2}\leqslant 1-|x|\right\}.
\]
Then for every $\rho>1$ the volume of the set 
\[
\Omega \cap \left\{(x,y,z): x \geqslant 1-\rho^{-1}\right\}
\]
 is $\rho^{-3} \pi/3$, whereas the Fourier transform of $\chi_{\Omega}$ at the point $(\rho,0,0)$ is given by
 \begin{equation*}
\widehat{\chi}_{\Omega}(\rho,0,0) = \int_{-1}^1 \pi \left(1-|x|\right)^2 e^{-2\pi i x \rho} dx =
 \frac1{\pi \rho^2} - \frac{\sin\left(2 \pi \rho \right)}{2\pi^2 \rho^3} \ . 
\end{equation*}
Hence \ $\left|\widehat{\chi}_{\Omega}(\rho,0,0)\right|$\  cannot be bounded from above with the volume of the set \ $\Omega \cap \left\{(x,y,z): x \geqslant 1-\rho^{-1}\right\}$.
 
\medskip

The following classical result by H. Brunn is a corollary of the Brunn-Minkowski inequality. See e.g. \cite{Kold}. 
\begin{theorem}\label{Brunn}
Let $d \geqslant 2$, let $\Omega \subset\mathbb{R}^d$ be a convex body, let $\Theta \in \mathbb{R}^d$, $\left\vert \Theta \right\vert =1$, 
and let $A_{\Omega , \Theta} (t)$ be the parallel section function in the direction $\Theta$. Then the function $\left(A_{\Omega , \Theta} (t)\right)^{1/(d-1)}$ is concave on its support.
\end{theorem}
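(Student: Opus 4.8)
The plan is to obtain Theorem~\ref{Brunn} as a direct consequence of the Brunn--Minkowski inequality applied to the slices of $\Omega$ orthogonal to $\Theta$. After rotating coordinates we may assume $\Theta=e_d$, so that $A_{\Omega,\Theta}(t)=\mathrm{Vol}_{d-1}(\Omega_t)$, where $\Omega_t=\{y\in\mathbb{R}^{d-1}:(y,t)\in\Omega\}$ is the slice at height $t$ (identifying the hyperplane $\{x_d=t\}$ with $\mathbb{R}^{d-1}$). The support of $A_{\Omega,\Theta}$ is the closed interval $[-a,b]$ obtained by projecting $\Omega$ orthogonally onto the $e_d$-axis; for each $t\in[-a,b]$ the slice $\Omega_t$ is a nonempty compact convex subset of $\mathbb{R}^{d-1}$, and $A_{\Omega,\Theta}$ vanishes outside $[-a,b]$.

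First I would record the elementary inclusion that carries all the geometric content: for $t_0,t_1\in[-a,b]$ and $\lambda\in[0,1]$, setting $t_\lambda=(1-\lambda)t_0+\lambda t_1$, convexity of $\Omega$ gives
\[
(1-\lambda)\,\Omega_{t_0}+\lambda\,\Omega_{t_1}\ \subseteq\ \Omega_{t_\lambda}\,,
\]
because if $(u,t_0)\in\Omega$ and $(v,t_1)\in\Omega$, then $((1-\lambda)u+\lambda v,\ t_\lambda)$ lies on the segment joining them and hence in $\Omega$. Next I would take $(d-1)$-dimensional volumes on both sides, using monotonicity of $\mathrm{Vol}_{d-1}$, the scaling identity $\mathrm{Vol}_{d-1}(sC)=s^{d-1}\,\mathrm{Vol}_{d-1}(C)$ for $s>0$, and the Brunn--Minkowski inequality in $\mathbb{R}^{d-1}$ applied to the compact convex sets $(1-\lambda)\Omega_{t_0}$ and $\lambda\Omega_{t_1}$. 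This chain of inequalities yields exactly
\[
A_{\Omega,\Theta}(t_\lambda)^{1/(d-1)}\ \geqslant\ (1-\lambda)\,A_{\Omega,\Theta}(t_0)^{1/(d-1)}+\lambda\,A_{\Omega,\Theta}(t_1)^{1/(d-1)}\,,
\]
which is the asserted concavity of $A_{\Omega,\Theta}^{1/(d-1)}$ on $[-a,b]$.

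There is essentially no substantive obstacle here, since the entire weight of the statement rests on the Brunn--Minkowski inequality, which we invoke as a classical black box; the rest is the standard reduction to parallel slices. The only points worth a sentence of care are that the slices $\Omega_t$ are nonempty for every $t$ in the closed support (true because the orthogonal projection of a convex body onto a line is a closed interval), so that the Minkowski sum above is a nonempty compact convex body to which Brunn--Minkowski legitimately applies; that in the endpoint case $d=2$ the exponent is $1$ and the inequality used degenerates to the one-dimensional Brunn--Minkowski inequality, i.e. the triangle inequality for lengths of intervals; and that at $t=-a$ or $t=b$ the slice may be lower dimensional and $A_{\Omega,\Theta}$ may vanish, but the displayed inequality still holds there because it only uses nonnegativity and monotonicity of volume.
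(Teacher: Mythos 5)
Your proof is correct and is precisely the standard derivation from the Brunn--Minkowski inequality that the paper only references (via the citation to Koldobsky) without writing out: convexity gives the Minkowski-sum inclusion for parallel slices, and Brunn--Minkowski in $\mathbb{R}^{d-1}$ together with homogeneity of volume yields the concavity of $A_{\Omega,\Theta}^{1/(d-1)}$. Your handling of the endpoint and lower-dimensional-slice cases is also appropriate.
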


We observe that the  $d$-dimensional Fourier transform of $\chi_{\Omega}$ is the Fourier transform of the parallel section function, namely  
\begin{equation}
\widehat{\chi}_{\Omega}(\rho \Theta) = \widehat{A_{\Omega , \Theta}} \left(\rho\right) \ . \label{Chi=A}
\end{equation}
Hence, when $d=2$ the study of $\widehat{\chi}_\Omega$ reduces to the study of the Fourier transform of a concave (on its support) one-dimensional function, whereas for the case $d\geqslant 3$ we only have a power of a concave function. This seems to be a relevant difference: for instance, observe that  the derivative of a concave function is monotone, whereas this may  not be true for the derivative of a power of a concave function.

\medskip

The next two theorems  provide estimates from below for the decay of the Fourier transform. The first estimate, for bodies with positive curvature, is an easy consequence of Theorem \ref{Herz}.  The second estimate, for general bodies with boundary of finite type, can be seen as a sort of converse of Corollary \ref{BNW1} (hence essentially  a converse of Theorem \ref{BNWtheorem}). It is slightly less precise than Theorem \ref{easy}, nevertheless it suffices to prove Theorem \ref{d3}. 
\begin{theorem}\label{easy}
Let $\Omega \subset \mathbb{R}^d$ be a convex body whose boundary $\partial \Omega$ has positive Gaussian curvature. Then there exists a positive constant $C$ such that for every direction $\Theta$ and $\rho\geqslant1,$
\begin{align*}
\int_\rho^{\rho+1} \left|\widehat{\chi_{\Omega}}\left(s\Theta\right)\right|^{2}ds
 \geqslant {C}{\rho^{-(d+1)}}.
\end{align*}
\end{theorem}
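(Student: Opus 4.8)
The plan is to substitute the asymptotic expansion of Theorem \ref{Herz} into the integral and to show that the leading oscillatory term cannot cancel once it is averaged over a unit interval in the radial variable. Write $\xi=s\Theta$ with $s\geqslant1$. Theorem \ref{Herz} gives, uniformly in the direction $\Theta$ (uniformity follows from compactness of $\partial\Omega$ together with strict positivity of the curvature), a decomposition $\widehat{\chi_{\Omega}}(s\Theta)=M(s)+R(s)$, where
\[
M(s)=\frac{-1}{2\pi i}\,s^{-\frac{d+1}{2}}\left(A\,e^{i\alpha(s)}-B\,e^{i\beta(s)}\right),
\]
with $A=K^{-1/2}(\Theta)$, $B=K^{-1/2}(-\Theta)$, $\alpha(s)=-2\pi s\,h_{\Omega}(\Theta)+\tfrac{\pi(d-1)}{4}$, $\beta(s)=2\pi s\,h_{\Omega}(-\Theta)-\tfrac{\pi(d-1)}{4}$ ($h_{\Omega}$ the support function of $\Omega$), and $|R(s)|\leqslant C_{0}\,s^{-\frac{d+3}{2}}$. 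Since $|\widehat{\chi_{\Omega}}(s\Theta)|^{2}\geqslant|M(s)|^{2}-2|M(s)|\,|R(s)|\geqslant|M(s)|^{2}-C_{1}\,s^{-(d+2)}$ (using $|M(s)|\leqslant\frac{A+B}{2\pi}s^{-\frac{d+1}{2}}$ and that $A+B\leqslant2K_{\min}^{-1/2}$ is bounded), it suffices to bound $\int_{\rho}^{\rho+1}|M(s)|^{2}\,ds$ below by a constant multiple of $\rho^{-(d+1)}$ and then absorb the remainder.

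The key point is that $\alpha(s)-\beta(s)=-2\pi s\,L(\Theta)+\tfrac{\pi(d-1)}{2}$ depends only on the width $L(\Theta)=h_{\Omega}(\Theta)+h_{\Omega}(-\Theta)$ of $\Omega$ in the direction $\Theta$, so
\[
|M(s)|^{2}=\frac{1}{4\pi^{2}}\,s^{-(d+1)}\left(A^{2}+B^{2}-2AB\cos\!\big(2\pi s\,L(\Theta)-\tfrac{\pi(d-1)}{2}\big)\right).
\]
Integrating the bracket over $[\rho,\rho+1]$ and using that the integral of $\cos(2\pi sL-c)$ over a unit interval has modulus at most $\mu(\Theta):=|\sin(\pi L(\Theta))|/(\pi L(\Theta))$, one obtains
\[
\int_{\rho}^{\rho+1}\left(A^{2}+B^{2}-2AB\cos\!\big(2\pi s\,L(\Theta)-\tfrac{\pi(d-1)}{2}\big)\right)ds\;\geqslant\;A^{2}+B^{2}-2AB\,\mu(\Theta),
\]
which is at least $(1-\mu(\Theta))(A^{2}+B^{2})$ because $2AB\leqslant A^{2}+B^{2}$. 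Here the sharp bound $\mu(\Theta)$ (rather than the cruder $1/(\pi L(\Theta))$) is essential, since it is always strictly less than $1$: indeed $|\sin x|<x$ for $x>0$. As $\Omega$ is a fixed convex body, $L(\Theta)$ ranges over a compact subinterval of $(0,\infty)$, hence $\mu_{\max}:=\max_{\Theta}\mu(\Theta)<1$; likewise $A^{2}+B^{2}\geqslant2/K_{\max}>0$ by boundedness of the curvature on $\partial\Omega$. Therefore $\int_{\rho}^{\rho+1}|M(s)|^{2}\,ds\geqslant c_{2}\,(\rho+1)^{-(d+1)}$ with $c_{2}=c_{2}(\Omega)>0$.

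Combining this with the remainder estimate and $(\rho+1)^{-(d+1)}\geqslant2^{-(d+1)}\rho^{-(d+1)}$ for $\rho\geqslant1$ gives $\int_{\rho}^{\rho+1}|\widehat{\chi_{\Omega}}(s\Theta)|^{2}\,ds\geqslant\tfrac12 c_{2}2^{-(d+1)}\rho^{-(d+1)}$ for all $\rho\geqslant\rho_{0}$, where $\rho_{0}=\rho_{0}(\Omega)$ is chosen so that $C_{1}\rho^{-1}\leqslant\tfrac12 c_{2}2^{-(d+1)}$ there. For the remaining range $1\leqslant\rho\leqslant\rho_{0}$ I would argue by compactness: the map $(\rho,\Theta)\mapsto\int_{\rho}^{\rho+1}|\widehat{\chi_{\Omega}}(s\Theta)|^{2}\,ds$ is continuous on $[1,\rho_{0}]\times\{\Theta\in\mathbb{R}^{d}:|\Theta|=1\}$ and strictly positive — positive because, for each fixed $\Theta$, $s\mapsto\widehat{\chi_{\Omega}}(s\Theta)$ extends to an entire function of $s$ that equals $|\Omega|\neq0$ at $s=0$ and hence has only isolated zeros, so it cannot vanish on an interval — whence it is bounded below there by some $c_{3}>0$, and $c_{3}\geqslant c_{3}\rho^{-(d+1)}$ since $\rho\geqslant1$. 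Taking $C=\min\{c_{3},\ \tfrac12 c_{2}2^{-(d+1)}\}$ proves the theorem. The only genuine difficulty is the possible pointwise cancellation in $Ae^{i\alpha(s)}-Be^{i\beta(s)}$ (which vanishes when $A=B$ and the phases coincide); this is exactly what averaging over the unit interval $[\rho,\rho+1]$ defeats, via the elementary bound $\mu(\Theta)<1$, and the uniformity of all the constants in $\Theta$ is supplied by compactness of $\partial\Omega$ together with positivity of the curvature.
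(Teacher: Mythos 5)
Your proposal is correct and follows essentially the same route as the paper: substitute the Herz asymptotic, reduce to lower-bounding $\int_\rho^{\rho+1}|A+Be^{2\pi i(s\sigma+\tau)}|^2\,ds$, and observe that because the width $\sigma=L(\Theta)$ is bounded away from $0$ uniformly over the sphere, the averaged cosine term has modulus bounded away from $1$. You are somewhat more explicit than the paper on two points it leaves implicit — the quantitative bound $\bigl|\int_\rho^{\rho+1}\cos(2\pi\sigma s+\tau)\,ds\bigr|\leqslant|\sin(\pi\sigma)|/(\pi\sigma)<1$, and a separate compactness-and-analyticity argument for the bounded range $1\leqslant\rho\leqslant\rho_0$ where the $O(|\xi|^{-(d+3)/2})$ remainder need not be dominated — but the core mechanism is identical.
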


\begin{theorem}\label{thm:da sotto}
Let $\Omega \subset \mathbb{R}^d$ be a convex body whose boundary $\partial \Omega$ has finite order of contact at every point. Assume that the origin is an interior point of $\Omega$ so that for every direction $\Theta$ the parallel section function $A_{\Omega,\Theta}$ has support in an interval $\left[-a,b\right]$, where $a=a(\Theta)$ and $b=b(\Theta)$ are positive functions. Then there exist positive constants $\alpha,\beta,c,C$ such that for every direction $\Theta$ and $\rho>c,$ 
\begin{align}
\int_{\alpha\rho\leqslant s\leqslant\beta\rho} \left|\widehat{\chi_{\Omega}}\left(s\Theta\right)\right|^{2}ds \label{needave} 
 \geqslant \frac{C}{\rho}\left[A_{\Omega,\Theta}\left(-a+\rho^{-1}\right)^{2}+A_{\Omega,\Theta}\left(b-\rho^{-1}\right)^{2}\right]. 
\end{align}
\end{theorem}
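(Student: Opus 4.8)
The plan is to push everything down to a one–dimensional problem for the parallel section function, to peel off the ``far'' part of the frequency integral using the upper bound already in hand (Corollary \ref{BNW1}), and then to treat the remaining ``near'' part as a purely local question at the two endpoints of the support.

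\emph{Reduction to one dimension.} By the identity \eqref{Chi=A} we may replace $\widehat{\chi_\Omega}(s\Theta)$ by $\widehat f(s)$, where $f=A_{\Omega,\Theta}$. I will use that $f\geq0$ is continuous with $\operatorname{supp}f=[-a,b]$, that $f$ is $C^\infty$ on the open interval $(-a,b)$ (since $\partial\Omega$ is smooth and $\Omega$ is strictly convex), that $f^{1/(d-1)}$ is concave (Theorem \ref{Brunn}), and that $a=a(\Theta)$, $b=b(\Theta)$ lie in a fixed compact subinterval of $(0,\infty)$ independent of $\Theta$. Put $P=f(-a+\rho^{-1})$ and $Q=f(b-\rho^{-1})$. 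Concavity of $p:=f^{1/(d-1)}$, together with its vanishing at the endpoints, gives the elementary two–sided bounds $(\rho u)^{d-1}P\leq f(-a+u)\leq P$ and $(\rho u)^{d-1}Q\leq f(b-u)\leq Q$ for $0<u\leq\rho^{-1}$, and in particular $P,Q\geq c\,\rho^{-(d-1)}$; these are invoked repeatedly.

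\emph{Reduction to a lower tail estimate.} It suffices to find $c_1,c_2>0$, depending only on $\Omega$, so that for all $\rho\geq\kappa$
\[
\int_\rho^\infty|\widehat f(s)|^2\,ds\ \geq\ \frac{c_1}{\rho}\bigl(P^2+Q^2\bigr),\qquad
\int_T^\infty|\widehat f(s)|^2\,ds\ \leq\ \frac{c_2}{T}\bigl(f(-a+T^{-1})^2+f(b-T^{-1})^2\bigr)\ \ (T\geq\kappa).
\]
The second inequality is immediate from Corollary \ref{BNW1}: squaring $|\widehat f(s)|\leq c\,s^{-1}(f(-a+s^{-1})+f(b-s^{-1}))$ and substituting $u=s^{-1}$ reduces it to $\int_0^{1/T}f(-a+u)^2\,du\leq T^{-1}f(-a+T^{-1})^2$ and its analogue at $b$, which are the upper halves of the bounds above. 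Granting the first, set $\alpha=1$ and $\beta=\max(2,\,2c_2/c_1)$; since $\beta\geq1$ and $f$ is monotone near each endpoint, $f(-a+(\beta\rho)^{-1})\leq P$ and $f(b-(\beta\rho)^{-1})\leq Q$, so $\int_{\alpha\rho}^{\beta\rho}|\widehat f|^2=\int_\rho^\infty|\widehat f|^2-\int_{\beta\rho}^\infty|\widehat f|^2\geq\tfrac1\rho(P^2+Q^2)(c_1-c_2/\beta)\geq\tfrac{c_1}{2\rho}(P^2+Q^2)$, which is the theorem with $C=c_1/2$ (taking $c=\kappa$). So everything comes down to the lower tail bound $\int_\rho^\infty|\widehat f|^2\gtrsim\rho^{-1}(P^2+Q^2)$.

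\emph{The lower tail — the crux.} Fix a smooth even cutoff $m$ with $m\equiv1$ on $[-1,1]$, $\operatorname{supp}m\subset[-2,2]$, $0\leq m\leq1$, and let $S_\rho$ have symbol $m(\cdot/\rho)$; then $2\int_\rho^\infty|\widehat f|^2\geq\int|1-m(s/\rho)|^2|\widehat f|^2\,ds=\|f-S_\rho f\|_2^2$, and it is enough to prove $\|f-S_\rho f\|_2^2\gtrsim\rho^{-1}(P^2+Q^2)$. Write $f=g_-+g_++g_0$ with $g_\pm$ supported in small fixed neighbourhoods $I_\mp$ of $-a$ and $b$ (and equal to $f$ there) and $g_0\in C_c^\infty((-a,b))$; since the kernel of $S_\rho$ is Schwartz at scale $\rho^{-1}$, on $I_-$ one has $(f-S_\rho f)=(g_--S_\rho g_-)+O_N(\rho^{-N})$, and likewise on the disjoint interval $I_+$. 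Hence $\|f-S_\rho f\|_2^2\geq\int_{I_-}|g_--S_\rho g_-|^2+\int_{I_+}|g_+-S_\rho g_+|^2-O_N(\rho^{-N})$ — with no cross term, precisely because $I_-$ and $I_+$ are disjoint. It remains to show the single–endpoint bound $\int_{I_-}|g_--S_\rho g_-|^2\gtrsim\rho^{-1}P^2$ (and its mirror near $b$). For this one compares, on a short interval $J=(-a,-a+\delta\rho^{-1})$ with $\delta$ small and fixed, the genuine mass $\int_J g_-=\int_0^{\delta/\rho}f(-a+u)\,du\asymp_\delta\rho^{-1}P$ with the smoothed mass $\int_J S_\rho g_-$, which after rescaling equals $\rho^{-1}P$ times the pairing of the normalized endpoint profile $r_\rho(v)=f(-a+v\rho^{-1})/P$ — an increasing function pinned at $r_\rho(1)=1$ and, by concavity of $p$, squeezed between $v^{\,d-1}$ and $\min(v,1)^{d-1}$ — against a fixed kernel of unit mass. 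Since $r_\rho(0^+)=0$, the genuine mass is a strictly superlinear function of $\delta$, while the smoothed mass keeps a linear–in–$\delta$ term from the tail of the smoothing kernel; choosing $\delta$ small (and, if necessary, using the freedom in $m$ and testing two scales $\delta,2\delta$ to sidestep the single coincidental value of $\delta$ where the two masses agree) forces $\bigl|\int_J(g_--S_\rho g_-)\bigr|\gtrsim\rho^{-1}P$, and Cauchy–Schwarz gives $\int_J|g_--S_\rho g_-|^2\gtrsim\rho^{-1}P^2$. Adding the two endpoint contributions completes the proof.

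\emph{Where the difficulty lies.} The one genuinely delicate point is the single–endpoint estimate above: one must exclude, \emph{uniformly in the direction $\Theta$}, an accidental cancellation between $g_-$ and its scale‑$\rho^{-1}$ low‑pass regularization, even though the endpoint profile $r_\rho$ may be quite irregular and changes with $\Theta$. The only structural input is the concavity of $f^{1/(d-1)}$ (equivalently, Brunn's theorem), and it has to be used both to fix the \emph{size} $\asymp\rho^{-1}P$ of the unresolved mass and to control its \emph{shape} at scale $\rho^{-1}$; making the superlinear‑versus‑linear dichotomy uniform is the heart of the matter. By contrast, the competition between the two endpoint contributions — the phenomenon that makes the average in $s$ necessary at all — costs nothing here: it is absorbed by the tail splitting of the second step and by the spatial disjointness of $I_-$ and $I_+$ in the third.
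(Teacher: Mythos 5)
Your outer architecture is sound and genuinely different from the paper's: the reduction via \eqref{Chi=A}, the splitting of the tail $\int_\rho^\infty$ against the Bruna--Nagel--Wainger upper bound of Corollary \ref{BNW1} to produce the band $[\alpha\rho,\beta\rho]$, and the spatial localization to two disjoint endpoint neighbourhoods are all correct up to harmless constants (where you invoke monotonicity of $f$ near the endpoints you should instead use inequality (\ref{lambda2lambda1}) of Lemma \ref{lem:concave}, which costs a fixed factor $\left(1+r_2/r_1\right)^{d-1}$). The problem is that everything is then made to rest on the single--endpoint non-cancellation claim $\bigl|\int_J\left(g_--S_\rho g_-\right)\bigr|\geqslant c\,\rho^{-1}P$ uniformly in $\Theta$ and $\rho$, and this claim is exactly what you do not prove; the heuristics offered do not close it.

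Concretely, two things go wrong. First, the ``genuine mass is superlinear in $\delta$'' step does not follow from Brunn's theorem, which is the only structural input you allow yourself: the normalized profile $r_\rho(v)=f(-a+v\rho^{-1})/P$ is constrained only by $v^{d-1}\leqslant r_\rho(v)\leqslant 1$ on $[0,1]$ together with concavity of its $(d-1)$-st root, and the plateau profile $r_\rho(v)=\min(v/\epsilon,1)^{d-1}$ with $\epsilon$ tiny is admissible and gives $\int_0^\delta r_\rho\asymp\delta$, i.e. linear mass. Excluding such profiles at scale $\rho^{-1}$ for all large $\rho$ would require a quantitative, $\Theta$-uniform use of the finite-type hypothesis that never appears in your argument. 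Second, the ``linear-in-$\delta$ term from the tail of the smoothing kernel'' is a signed quantity: since you need $m\equiv1$ on $[-1,1]$ (so that $1-m(\cdot/\rho)$ is supported where $|s|\geqslant\rho$), the kernel $K$ of $S_\rho$ cannot be nonnegative (a nonnegative integrable kernel satisfies $|\widehat{K}(\xi)|<\widehat{K}(0)$ for $\xi\neq0$), so the coefficient $\int_0^\infty K(-w)\,r_\rho(w)\,dw$ has no sign and no uniform lower bound over the admissible profiles; testing two scales $\delta,2\delta$ removes only an accidental equality at one value of $\delta$, not the degeneration of this coefficient for a bad pair $(\Theta,\rho)$. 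Nor can you trade $m\equiv1$ near $0$ for a nonnegative Fej\'er-type kernel: then $1-m(\xi/\rho)$ vanishes only to second order at $\xi=0$, the low-frequency range $\kappa<|\xi|\leqslant\alpha\rho$ that your choice of symbol was designed to annihilate reappears, and the worst-case growth $A_{\Omega,\Theta}(b-|\xi|^{-1})\leqslant(\rho/|\xi|)^{d-1}A_{\Omega,\Theta}(b-\rho^{-1})$ permitted by Lemma \ref{lem:concave} makes that range uncontrollable by the available bounds as soon as $d\geqslant3$. This is precisely the tension the paper resolves by filtering with the $d$-th order finite difference $\Delta^d_{1/\rho}$: its symbol $(e^{2\pi i\xi/\rho}-1)^d$ vanishes to order $d$ at $0$, which tames the low frequencies, and the operator is local (a finite combination of translates), so near each endpoint $\Delta^d_{1/\rho}A_{\Omega,\Theta}=\pm A_{\Omega,\Theta}$ and the spatial lower bound is immediate --- no uniform non-cancellation statement of the kind your proof needs is ever required. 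As it stands, your proposal has a genuine gap at the step you yourself identify as the heart of the matter.
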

Assuming that the origin is an interior point of $\Omega$ is not necessary, but simplifies the notation and does not affect the generality of the theorem since a translation does not change the modulus of the Fourier transform.

A different proof of the above result has been shown to us by A. Podkorytov \cite{PodPersCom}. Our proof is an elaboration of ideas in \cite{BCT} and \cite{BT22}.

Observe that, since  the function $\widehat{\chi_{\Omega}}$ may have infinitely many zeros, we cannot bound $\left\vert\widehat{\chi_{\Omega}}\left(\rho\Theta\right)\right\vert$  from below for every $\rho$. Then an average, such as  \ $\int_{\alpha\rho\leqslant s\leqslant\beta\rho}\left|\widehat{\chi_{\Omega}}\left(s\Theta\right)\right|^{2}ds$,  is necessary. 

\section{Proof of Theorem \ref{easy} and Theorem \ref{thm:da sotto}} \label{tre}
Theorem \ref{easy} is an easy consequence of the asymptotic estimate for the Fourier transform of a convex body with positive Gaussian curvature.
\begin{proof}[Proof of Theorem \ref{easy}]
By Theorem \ref{Herz}
\begin{align*}
& \left|\widehat{\chi_{\Omega}}\left(\xi\right) \right| \\
  = &	\frac{1}{2\pi}\left\vert \xi\right\vert ^{-\frac{d+1}{2}}\left|K^{-\frac{1}{2}}\left(\xi\right)e^{-2\pi i\left(\xi\cdot p\left(\xi\right)-\frac{d-1}{8}\right)}-K^{-\frac{1}{2}}\left(-\xi\right)e^{-2\pi i\left(\xi\cdot p\left(-\xi\right)+\frac{d-1}{8}\right)}\right| \\
	 & +O\left(\left\vert \xi\right\vert ^{-\frac{d+3}{2}}\right) \\
	 = & \frac1{2\pi} \left|\xi\right|^{-\frac{d+1}2}  \left|K^{-\frac{1}{2}}\left(\xi\right) + K^{-\frac{1}{2}}\left(-\xi\right) e^{2\pi i\left(\xi\cdot \left(p(\xi)-p(-\xi)  \right) -\frac{d-1}4\right)}\right|
	  +O\left(\left\vert \xi\right\vert ^{-\frac{d+3}{2}}\right).
\end{align*}
Hence,
\begin{align*}
& \int_\rho^{\rho+1} \left|\widehat{\chi_{\Omega}}\left(s\Theta\right) \right|^2 ds \\
 \geqslant  & c \rho^{-(d+1)} \int_\rho^{\rho+1}    \left|K^{-\frac{1}{2}}\left(\Theta\right) + K^{-\frac{1}{2}}\left(-\Theta\right) e^{2\pi i\left(s\Theta\cdot \left(p(\Theta)-p(-\Theta)  \right) -\frac{d-1}4\right)}\right|^2 ds \\
 & + O\left( \rho^{-(d+2)}  \right) \\
 \end{align*}
 Observe that, being $\Omega$ a convex body, the phase $\sigma = \Theta\cdot \left(p(\Theta)-p(-\Theta)  \right)\neq0$ for every direction $\Theta$.  Hence, we are lead to estimate an integral of the form
\begin{align*}
 & \int_\rho^{\rho+1}    \left|A + B e^{2\pi i\left(s \sigma +\tau \right)}\right|^2 ds \\
  = & A^{2}+B^{2}+2AB\int_{\rho}^{\rho+1}\cos\left(2\pi \sigma s+\tau\right)ds\\
 = & \left(A-B\right)^{2}+2AB\left(1+\int_{\rho}^{\rho+1}\cos\left(2\pi \sigma s+\tau\right)ds\right).
 \end{align*}
 Hence
 \[
 \int_\rho^{\rho+1}  \left|A + B e^{2\pi i\left(\rho \sigma +\tau \right)}\right|^2 ds \geqslant
 2AB\left(1+\int_{\rho}^{\rho+1}\cos\left(2\pi \sigma s+\tau\right)ds\right).
 \]
Finally there exists $c>0$ such that, for every $\rho$,
 \[
 1+ \int_{\rho}^{\rho+1}\cos\left(2\pi \sigma s+\tau\right)ds > c.
 \]
\end{proof}
The proof of Theorem \ref{thm:da sotto} requires more work. We start with the following lemmas.
\begin{lemma}
\label{lem:uniform ball condition}There exists $c>0$ such that for $\rho$ large enough
and for every $\Theta$ we have
\[
A_{\Omega,\Theta}\left(-a+\rho^{-1}\right)\geqslant c\rho^{-\left(d-1\right)/2}
\]
and
\[
A_{\Omega,\Theta}\left(b-\rho^{-1}\right)\geqslant c\rho^{-\left(d-1\right)/2}.
\]
\end{lemma}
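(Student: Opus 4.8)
The plan is to derive the lemma from the uniform interior ball condition for $\Omega$. Since $\partial\Omega$ is smooth and compact, there is a radius $r_{0}>0$ such that for every $x\in\partial\Omega$ the closed ball of radius $r_{0}$ centered at $x-r_{0}n(x)$ is contained in $\Omega$, where $n(x)$ denotes the outward unit normal at $x$. I would prove this in the usual way: near a boundary point write $\partial\Omega$ as the graph of a convex function $f$ with $f(0)=0$, $\nabla f(0)=0$; smoothness and compactness give a uniform bound $\Vert D^{2}f\Vert\leqslant M$, hence $0\leqslant f(x')\leqslant\frac{M}{2}\vert x'\vert^{2}$, on coordinate neighborhoods of a uniform size $\delta$; an elementary computation then shows that the ball of radius $r_{0}:=\min(1/M,\delta)$ lies below the graph locally, and convexity of $\Omega$ (via the supporting hyperplane at each boundary point) upgrades this to global containment. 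Crucially $r_{0}$ depends only on the curvature bound $M$, hence is independent of the direction $\Theta$.

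Fix a direction $\Theta$ and let $x_{-}\in\partial\Omega$ be the point where $x\cdot\Theta$ attains its minimum $-a=-a(\Theta)$ over $\Omega$. The supporting hyperplane $\{x\cdot\Theta=-a\}$ forces $n(x_{-})=-\Theta$, so the ball $B:=\overline{B}\left(x_{-}+r_{0}\Theta,\,r_{0}\right)$ lies in $\Omega$, and its center has height $\left(x_{-}+r_{0}\Theta\right)\cdot\Theta=-a+r_{0}$. Consequently, for any $\rho$ with $\rho^{-1}<r_{0}$ the hyperplane $\{x\cdot\Theta=-a+\rho^{-1}\}$ cuts $B$ in a $(d-1)$-ball of radius $\sqrt{r_{0}^{2}-(r_{0}-\rho^{-1})^{2}}$, whence
\[
A_{\Omega,\Theta}\!\left(-a+\rho^{-1}\right)\geqslant\mathrm{Vol}_{d-1}\!\left(B\cap\{x\cdot\Theta=-a+\rho^{-1}\}\right)=\omega_{d-1}\left(2r_{0}\rho^{-1}-\rho^{-2}\right)^{(d-1)/2},
\]
where $\omega_{d-1}$ is the volume of the unit ball in $\mathbb{R}^{d-1}$. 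For $\rho\geqslant 2/r_{0}$ one has $2r_{0}\rho^{-1}-\rho^{-2}\geqslant r_{0}\rho^{-1}$, so $A_{\Omega,\Theta}(-a+\rho^{-1})\geqslant\omega_{d-1}\,r_{0}^{(d-1)/2}\,\rho^{-(d-1)/2}$, with a constant independent of $\Theta$. The estimate for $A_{\Omega,\Theta}(b-\rho^{-1})$ is obtained identically, using the point where $x\cdot\Theta$ is maximized.

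There is essentially one obstacle, namely establishing the uniform interior ball condition with a radius that does not depend on $\Theta$; once that is available the slicing computation is routine. It is worth remarking that this argument uses only that $\partial\Omega$ is $C^{2}$ and convex, not the finite order of contact hypothesis: the lower bound of order $\rho^{-(d-1)/2}$ is the worst case, attained at points of positive Gaussian curvature, while at flatter boundary points the cross-section $A_{\Omega,\Theta}(-a+\rho^{-1})$ is even larger.
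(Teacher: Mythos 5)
Your proof follows the same two-step strategy as the paper: establish a uniform interior ball condition for $\Omega$ (with a radius independent of the direction $\Theta$) and then slice the interior ball tangent at the extremal point by the hyperplane at height $\rho^{-1}$; your explicit slicing computation, and the remark that finite order of contact is not used, are both correct. The one place to be careful is the passage from the local estimate to the global statement $B\left(x_{-}+r_{0}\Theta,r_{0}\right)\subset\Omega$: the local graph computation only shows the ball stays on the correct side of $\partial\Omega$ near the tangency point, and your appeal to ``convexity via supporting hyperplanes'' to rule out the ball exiting $\Omega$ elsewhere is asserted rather than argued (this is essentially Blaschke's rolling theorem and does require a real argument). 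The paper sidesteps this by deriving the uniform interior ball condition directly from the Tubular Neighborhood Theorem, whose injectivity statement is precisely what rules out a second tangency; filling your gap with that argument, or with some equivalent of it, would make the proofs essentially identical.
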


\begin{proof}
Let ${n}\left(x\right)$ denote the inward normal at $x\in\partial\Omega$.
By the Tubular Neighborhood Theorem (see e.g.\ Theorem 6.24 in \cite{Lee})
there exists $\varepsilon>0$ such that the map
\[
T\left(x,\alpha\right)=x+\alpha{n}\left(x\right)
\]
defines a diffeomorphism between $\partial\Omega\times\left(-\varepsilon,\varepsilon\right)$
and a neighborhood $U$ of $\partial\Omega$.

Now, let $y=x+\alpha{n}\left(x\right)\in U$, with $x\in\partial\Omega$.
We claim that $d\left(y,\partial\Omega\right)=\alpha$, and that $x$
is the unique point on $\partial\Omega$ such that $d\left(x,y\right)=\alpha$.
To prove this, suppose there exists $z\in\partial\Omega$ such that
$d\left(z,y\right)=\beta\leqslant\alpha$. Then we would have 
\[
y=x+\alpha{n}\left(x\right)=z+\beta{n}\left(z\right)
\]
contradicting the injectivity of $T$. This, in particular, implies
that $x+\frac{1}{2}\varepsilon{n}\left(x\right)\in\Omega$
for every $x\in\partial\Omega$. Otherwise, there would exist $z\in\partial\Omega$
such that $d\left(x+\frac{1}{2}\varepsilon{n}\left(x\right),z\right)<\frac{1}{2}\varepsilon$,
again leading to a contradiction. Let now $B\left(x+\frac{1}{2}\varepsilon{n}\left(x\right),\frac{1}{2}\varepsilon\right)$
denote the ball radius $\frac{1}{2}\varepsilon$ centered at $x+\frac{1}{2}{n}\left(x\right)$.
We claim that this ball is contained in $\overline{\Omega}$. To see
this, assume there exists $y\in B\left(x+\frac{1}{2}\varepsilon{n}\left(x\right),\frac{1}{2}\varepsilon\right)$
such that $y\notin\overline{\Omega}$. In this case, we can find $z\in\partial\Omega$
satisfying $d\left(x+\frac{1}{2}\varepsilon{n}\left(x\right),z\right)\leqslant\frac{1}{2}\varepsilon$
which as before would yield a contradiction, since $z\neq x$.

Finally let $\rho>2\varepsilon^{-1}$, then $A_{\Omega,\Theta}\left(b-\rho^{-1}\right)$
and $A_{\Omega,\Theta}\left(-a+\rho^{-1}\right)$, that is the parallel section
functions of $\Omega$ are larger than the parallel section functions
of a suitable sphere of radius $\frac{1}{2}\varepsilon$. The lemma
follows.
\end{proof}
\begin{lemma}
\label{lem:concave}Let $f$ be nonnegative and concave on its support
$\left[-a,b\right]$, where $a,b>0$. Also assume $f\left(-a\right)=f\left(b\right)=0$.
Then, if $0\leqslant\lambda_{1}<\lambda_{2}\leqslant b$ or $-a\leqslant\lambda_{2}<\lambda_{1}\leqslant0$,
\begin{equation}
f\left(\lambda_{2}\right)\leqslant\left(1+\max\left(\frac{b}{a},\frac{a}{b}\right)\right)f\left(\lambda_{1}\right),\label{lambda2lambda1}
\end{equation}
In particular for every $x\in\mathbb{R}$
\begin{equation}
f\left(x\right)\leqslant\left(1+\max\left(\frac{b}{a},\frac{a}{b}\right)\right)f\left(0\right)\text{.}\label{2f(0)}
\end{equation}
Moreover, for $0\leqslant\lambda_{1}<\lambda_{2}<b$
\begin{equation}
f\left(\lambda_{1}\right)\leqslant\frac{b-\lambda_{1}}{b-\lambda_{2}}f\left(\lambda_{2}\right)\label{lambda1<lambda2-1}
\end{equation}
and for $-a<\lambda_{2}<\lambda_{1}\leqslant0$
\begin{equation}
f\left(\lambda_{1}\right)\leqslant\frac{a+\lambda_{1}}{a+\lambda_{2}}f\left(\lambda_{2}\right).\label{lambda1<lambda2-2}
\end{equation}
\end{lemma}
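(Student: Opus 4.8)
The statement is a collection of elementary facts about a nonnegative concave function, and the plan is to deduce all of them from the standard \emph{slope‑monotonicity} characterization of concavity together with the normalization $f(-a)=f(b)=0$: if $f$ is concave on $[-a,b]$ and $-a\leqslant x_{1}<x_{2}<x_{3}\leqslant b$, then
\[
\frac{f(x_{2})-f(x_{1})}{x_{2}-x_{1}}\ \geqslant\ \frac{f(x_{3})-f(x_{2})}{x_{3}-x_{2}}.
\]
In each estimate we will choose one of the outer points $x_{1},x_{3}$ to be an endpoint of the support, where $f$ vanishes, and then read off the inequality. (If $f\equiv0$ every inequality is trivial, so one may tacitly assume $f>0$ on $(-a,b)$, although the computations below never actually divide by a value of $f$.)

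First I would establish the two one‑sided bounds \eqref{lambda1<lambda2-1} and \eqref{lambda1<lambda2-2}, which come directly from the chord inequality. For $0\leqslant\lambda_{1}<\lambda_{2}<b$, write $\lambda_{2}=\tfrac{b-\lambda_{2}}{b-\lambda_{1}}\,\lambda_{1}+\tfrac{\lambda_{2}-\lambda_{1}}{b-\lambda_{1}}\,b$ as a convex combination of $\lambda_{1}$ and $b$; concavity and $f(b)=0$ give $f(\lambda_{2})\geqslant\tfrac{b-\lambda_{2}}{b-\lambda_{1}}f(\lambda_{1})$, and multiplying through by $\tfrac{b-\lambda_{1}}{b-\lambda_{2}}>0$ yields \eqref{lambda1<lambda2-1}. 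The estimate \eqref{lambda1<lambda2-2} follows in the same fashion, writing $\lambda_{2}$ as a convex combination of $-a$ and $\lambda_{1}$ and using $f(-a)=0$.

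Next comes \eqref{lambda2lambda1}. In the case $0\leqslant\lambda_{1}<\lambda_{2}\leqslant b$ I would apply slope monotonicity to the triple $-a<\lambda_{1}<\lambda_{2}$: since $f(-a)=0$,
\[
\frac{f(\lambda_{1})}{\lambda_{1}+a}\ \geqslant\ \frac{f(\lambda_{2})-f(\lambda_{1})}{\lambda_{2}-\lambda_{1}},
\qquad\text{hence}\qquad
f(\lambda_{2})\ \leqslant\ \frac{\lambda_{2}+a}{\lambda_{1}+a}\,f(\lambda_{1})\ \leqslant\ \frac{a+b}{a}\,f(\lambda_{1}),
\]
where the last step uses $\lambda_{1}\geqslant0$ and $\lambda_{2}\leqslant b$. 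Symmetrically, in the case $-a\leqslant\lambda_{2}<\lambda_{1}\leqslant0$, applying slope monotonicity to $\lambda_{2}<\lambda_{1}<b$ and using $f(b)=0$ gives $f(\lambda_{2})\leqslant\tfrac{b-\lambda_{2}}{b-\lambda_{1}}f(\lambda_{1})\leqslant\tfrac{a+b}{b}f(\lambda_{1})$, now using $\lambda_{1}\leqslant0$ and $\lambda_{2}\geqslant-a$. Since $\tfrac{a+b}{a}=1+\tfrac ba$ and $\tfrac{a+b}{b}=1+\tfrac ab$ are both at most $1+\max(\tfrac ba,\tfrac ab)$, this proves \eqref{lambda2lambda1}. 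Finally \eqref{2f(0)} is the special case $\lambda_{1}=0$: for $x\in[0,b]$ use the first case with $\lambda_{2}=x$, for $x\in[-a,0]$ use the second case with $\lambda_{2}=x$, and for $x\notin[-a,b]$ one has $f(x)=0$ so the inequality holds trivially.

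There is no genuine obstacle here; the only points that need care are keeping track of which endpoint ($-a$ or $b$) to pair with $\lambda_{1}$ so that the chord or slope inequality points in the right direction, and checking the boundary cases $\lambda_{2}=b$, the degenerate case $f\equiv0$, and $x$ outside the support for \eqref{2f(0)} — none of which causes difficulty.
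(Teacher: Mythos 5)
Your proof is correct and uses essentially the same argument as the paper: in each estimate you anchor the concavity inequality at an endpoint where $f$ vanishes and then read off the resulting ratio bound, with the bound $\frac{\lambda_2+a}{\lambda_1+a}\leqslant\frac{a+b}{a}$ playing the same role. The paper writes the chord inequality at a specific intermediate point rather than invoking slope monotonicity or a convex-combination decomposition, but these are equivalent formulations of the same step, so there is no substantive difference.
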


\begin{proof}
Assume $0\leqslant\lambda_{1}<\lambda_{2}\leqslant b$. Since $f$
is concave in $\left[-a,\lambda_{2}\right]$ we have
\[
f\left(-a\right)+\frac{f\left(\lambda_{2}\right)-f\left(-a\right)}{\lambda_{2}+a}\left(\lambda_{1}+a\right)\leqslant f\left(\lambda_{1}\right).
\]
The assumption $f\left(-a\right)=0$ gives
\[
f\left(\lambda_{2}\right)\left(\lambda_{1}+a\right)\leqslant\left(\lambda_{2}+a\right)f\left(\lambda_{1}\right).
\]
Then, since $\frac{\lambda_{2}+a}{\lambda_{1}+a}\leqslant\frac{b+a}{a}=1+\frac{b}{a}$,
we obtain 
\[
f\left(\lambda_{2}\right)\leqslant\left(1+\frac{b}{a}\right)f\left(\lambda_{1}\right).
\]
The proof when $-a\leqslant\lambda_{2}<\lambda_{1}\leqslant0$ is
similar, with the constant $1+b/a$ replaced by $1+a/b$.

Let $0\leqslant\lambda_{1}<\lambda_{2}<b$, since $f$ is concave
in $\left[\lambda_{1},b\right]$, we obtain
\[
f\left(b\right)+\frac{f\left(\lambda_{1}\right)-f\left(b\right)}{\lambda_{1}-b}\left(\lambda_{2}-b\right)\leqslant f\left(\lambda_{2}\right),
\]
so that
\[
f\left(\lambda_{1}\right)\leqslant\frac{b-\lambda_{1}}{b-\lambda_{2}}f\left(\lambda_{2}\right).
\]
Then we obtain (\ref{lambda1<lambda2-1}). The proof of (\ref{lambda1<lambda2-2})
is similar.
\end{proof}
\begin{proof}[Proof of Theorem \ref{thm:da sotto}]
We can assume that $\Omega$ contains a ball of
radius $r_{1}$ centered at the origin and it is contained in a ball
of radius $r_{2}$ centered at the origin. In particular, for every
direction $\Theta$, the support $\left[-a,b\right]$ of the parallel section function $A_{\Omega,\Theta}$ satisfies $r_{1}<a,b<r_{2}$. 

Now let 
\begin{equation}
\Delta_{h}^{d}f\left(t\right)=\sum_{k=0}^{d}{d \choose k}\left(-1\right)^{d-k}f\left(t+kh\right).\label{Def_diff_finita}
\end{equation}
be the finite difference operator of order $d$ with step $h$ (see
e.g. \cite{DeVoreLorentz}). If $f\in L^{1}\left(\mathbb{R}\right)$, then for every $\xi\in\mathbb{R}$,
\[
\widehat{\Delta_{h}^{d}f}\left(\xi\right)=\left(e^{2\pi ih\xi}-1\right)^{d}\widehat{f}\left(\xi\right).
\]
Let $0<\alpha<\beta$ constants, with $\alpha$ small and $\beta$
large, to be better specified later. Let $\kappa>0$ be the constant in Corollary \ref{BNW1} so that for every 
$\left\vert\xi\right\vert\geqslant\kappa$, (\ref{Chi=A}) yields,
\begin{align}
\left\vert\widehat{A}_{\Omega,\Theta}(\xi) \right\vert  & = \left\vert\widehat{\chi}_{\Omega}(\xi \Theta) \right\vert\nonumber \\ 
 & \leqslant c  \ \left\vert\xi\right\vert^{-1} \left(A_{\Omega , \Theta} (-a+\left\vert\xi\right\vert^{-1})+A_{\Omega , \Theta} (b-\left\vert\xi\right\vert^{-1})\right) \ . \label{BNW2} 
\end{align}
Then the Parseval identity gives
\begin{align*}
\int_{\mathbb{R}}\left|\Delta_{h}^{d}A_{\Omega ,\Theta}\left(t\right)\right|^{2}dt= & \int_{\mathbb{R}}\left|e^{2\pi ih\xi}-1\right|^{2d}\left|\widehat{A_{\Omega ,\Theta}}\left(\xi\right)\right|^{2}d\xi\\
= & \int_{\left|\xi\right|\leqslant\kappa}\left|e^{2\pi ih\xi}-1\right|^{2d}\left|\widehat{A_{\Omega ,\Theta}}\left(\xi\right)\right|^{2}d\xi\\
 & +\int_{\kappa<\left|\xi\right|\leqslant\alpha\rho}\left|e^{2\pi ih\xi}-1\right|^{2d}\left|\widehat{A_{\Omega ,\Theta}}\left(\xi\right)\right|^{2}d\xi\\
 & +\int_{\alpha\rho\leqslant\left|\xi\right|\leqslant\beta\rho}\left|e^{2\pi ih\xi}-1\right|^{2d}\left|\widehat{A_{\Omega ,\Theta}}\left(\xi\right)\right|^{2}d\xi\\
 & +\int_{\left|\xi\right|\geqslant\beta\rho}\left|e^{2\pi ih\xi}-1\right|^{2d}\left|\widehat{A_{\Omega ,\Theta}}\left(\xi\right)\right|^{2}d\xi\\
= & \ \mathcal{A}+\mathcal{B}+\mathcal{C}+\mathcal{D}.
\end{align*}
From now on we let $h=\rho^{-1}$. To estimate $\mathcal{A}$ observe that
\[
\left|\widehat{A_{\Omega ,\Theta}}\left(\xi\right)\right|\leqslant\int_{-a}^{b}A_{\Omega ,\Theta}\left(t\right)dt=\left|\Omega\right|.
\]
Hence
\begin{align*}
\mathcal{A}  \leqslant\int_{\left|\xi\right|\leqslant\kappa}\left|2\pi\rho^{-1}\xi\right|^{2d}\left|\widehat{A_{\Omega ,\Theta}}\left(\xi\right)\right|^{2}d\xi
  \leqslant\left|\Omega\right|^{2}\left|2\pi\rho^{-1}\right|^{2d}\int_{\left|\xi\right|\leqslant\kappa}\left|\xi\right|^{2d}d\xi\leqslant c\rho^{-2d}.
\end{align*}
To estimate $\mathcal{B}$ and $\mathcal{D}$ we use (\ref{BNW2}),
\begin{align*}
\mathcal{B}\leqslant & \int_{\kappa<\left|\xi\right|\leqslant\alpha\rho}\left|2\pi\rho^{-1}\xi\right|^{2d}\left|\widehat{A_{\Omega,\Theta}}\left(\xi\right)\right|^{2}d\xi\\
\leqslant & c\rho^{-2d}\int_{\kappa<\left|\xi\right|\leqslant\alpha\rho}\left|\xi\right|^{2d-2}\left[A_{\Omega,\Theta}\left(-a+\left|\xi\right|^{-1}\right)^{2}+A_{\Omega,\Theta}\left(b-\left|\xi\right|^{-1}\right)^{2}\right]d\xi
\end{align*}
By Theorem \ref{Brunn}, there exists a function $\phi_\Theta$, concave on its support, such that
\[
A_{\Omega,\Theta}\left(t\right)=\phi_{\Theta}^{d-1}\left(t\right).
\]
If
$\alpha<1$ and $\left|\xi\right|\leqslant\alpha\rho$, then $b-\left|\xi\right|^{-1}\leqslant b-\rho^{-1}$.
Then (\ref{lambda1<lambda2-1}) yields
\[
\phi_{\Theta}\left(b-\left|\xi\right|^{-1}\right)\leqslant\frac{\left|\xi\right|^{-1}}{\rho^{-1}}\phi_{\Theta}\left(b-\rho^{-1}\right).
\]
Then, taking this inequality to the power $d-1$, 
\[
A_{\Omega,\Theta}\left(b-\left|\xi\right|^{-1}\right)\leqslant\left(\frac{\rho}{\left|\xi\right|}\right)^{d-1}A_{\Omega,\Theta}\left(b-\rho^{-1}\right).
\]
Hence
\begin{align*}
\mathcal{B}\leqslant & c\rho^{-2d}\int_{\kappa<\left|\xi\right|\leqslant\alpha\rho}\left|\xi\right|^{2d-2}\left[\left(\frac{\rho}{\left|\xi\right|}\right)^{2d-2}A_{\Omega,\Theta}\left(-a+\rho^{-1}\right)^{2}\right.\\
&+\left. \left(\frac{\rho}{\left|\xi\right|}\right)^{2d-2}A_{\Omega,\Theta}\left(b-\rho^{-1}\right)^{2}\right]d\xi\\
= & c\rho^{-2}\int_{\kappa<\left|\xi\right|\leqslant\alpha\rho}\left[A_{\Omega,\Theta}\left(-a+\rho^{-1}\right)^{2}+A_{\Omega,\Theta}\left(b-\rho^{-1}\right)^{2}\right]d\xi\\
\leqslant & c\rho^{-1}\alpha\left[A_{\Omega,\Theta}\left(-a+\rho^{-1}\right)^{2}+A_{\Omega,\Theta}\left(b-\rho^{-1}\right)^{2}\right].
\end{align*}
Let us consider the term $\mathbb{\mathcal{D}}$. Let $\delta=1+\frac{r_{2}}{r_{1}}$. Since $r_{1}<a,b<r_{2}$ we have
\[
\delta\geqslant1+\max\left(\frac{b}{a},\frac{a}{b}\right).
\]
Hence, if $\left|\xi\right|\geqslant\beta\rho$, by (\ref{lambda2lambda1}),
we have
\[
A_{\Omega,\Theta}\left(-a+\left|\xi\right|^{-1}\right)\leqslant\delta^{d-1}A_{\Omega,\Theta}\left(-a+\rho^{-1}\right),
\]
and
\[
A_{\Omega,\Theta}\left(b-\left|\xi\right|^{-1}\right)\leqslant\delta^{d-1}A_{\Omega,\Theta}\left(b-\rho^{-1}\right).
\]
Arguing as above,
\begin{align*}
\mathcal{D}= & \int_{\left|\xi\right|\geqslant\beta\rho}\left|e^{2\pi ih\xi}-1\right|^{2d}\left|\widehat{A_{\Omega,\Theta}}\left(\xi\right)\right|^{2}d\xi\\
\leqslant & c\int_{\left|\xi\right|\geqslant\beta\rho}\left|\xi\right|^{-2}\left[A_{\Omega,\Theta}\left(-a+\left|\xi\right|^{-1}\right)+A_{\Omega,\Theta}\left(b-\left|\xi\right|^{-1}\right)\right]^{2}d\xi\\
\leqslant & c\delta^{2d-2}\left[A_{\Omega,\Theta}\left(-a+\rho^{-1}\right)^{2}+A_{\Omega,\Theta}\left(b-\rho^{-1}\right)^{2}\right]\int_{\left|\xi\right|\geqslant\beta\rho}\left|\xi\right|^{-2}d\xi\\
\leqslant & c\beta^{-1}\rho^{-1}\left[A_{\Omega,\Theta}\left(-a+\rho^{-1}\right)^{2}+A_{\Omega,\Theta}\left(b-\rho^{-1}\right)^{2}\right].
\end{align*}
Finally
\begin{align*}
\mathcal{C}= & \int_{\alpha\rho\leqslant\left|\xi\right|\leqslant\beta\rho}\left|e^{2\pi ih\xi}-1\right|^{2d}\left|\widehat{A_{\Omega,\Theta}}\left(\xi\right)\right|^{2}ds\leqslant & 2^{2d}\int_{\alpha\rho\leqslant\left|\xi\right|\leqslant\beta\rho}\left|\widehat{A_{\Omega,\Theta}}\left(\xi\right)\right|^{2}ds.
\end{align*}
Hence 
\begin{align}
& \int_{\mathbb{R}}\left|\Delta_{h}^{d}A_{\Omega,\Theta}\left(t\right)\right|^{2}dt \nonumber\\
\leqslant & c\rho^{-2d}+c\alpha\rho^{-1}\left[A_{\Omega,\Theta}\left(-a+\rho^{-1}\right)^{2}+A_{\Omega,\Theta}\left(b-\rho^{-1}\right)^{2}\right]\nonumber \\
 & +2^{2d}\int_{\mathbb{\alpha\rho\leqslant\left|\xi\right|\leqslant\beta\rho}}\left|\widehat{A_{\Omega,\Theta}}\left(\xi\right)\right|^{2}d\xi\label{eq:A+B+C}\\
 & +c\beta^{-1}\rho^{-1}\left[A_{\Omega,\Theta}\left(-a+\rho^{-1}\right)^{2}+A_{\Omega,\Theta}\left(b-\rho^{-1}\right)^{2}\right].\nonumber 
\end{align}
Since $A_{\Omega, \Theta}\left(t+kh\right)$ has support in the interval $\left[-a-kh,b-kh\right]$, 
it follows from  (\ref{Def_diff_finita}) that if $b-h<t<b$ then $\Delta_{h}^{n}A_{\Omega,\Theta}\left(t\right)=A_{\Omega,\Theta}\left(t\right)$.
Hence, by (\ref{lambda2lambda1}),
\begin{align*}
\int_{\mathbb{R}}\left|\Delta_{h}^{d}A_{\Omega,\Theta}\left(t\right)\right|^{2}dt\geqslant & \int_{b-h}^{b}\left|A_{\Omega,\Theta}\left(t\right)\right|^{2}dt\geqslant\int_{b-h}^{b-h/2}\left|A_{\Omega,\Theta}\left(t\right)\right|^{2}dt\\
\geqslant\delta^{2d-2} & \frac{h}{2}A_{\Omega,\Theta}\left(b-h/2\right)^{2}\geqslant c\rho^{-1}A_{\Omega,\Theta}\left(b-\rho^{-1}\right)^{2}.
\end{align*}
A similar computation also gives
\[
\int_{\mathbb{R}}\left|\Delta_{h}^{d}A_{\Omega,\Theta}\left(t\right)\right|^{2}dt\geqslant c\rho^{-1}A_{\Omega,\Theta}\left(-a+\rho^{-1}\right)^{2}.
\]
Therefore
\[
\int_{\mathbb{R}}\left|\Delta_{h}^{d}A_{\Omega,\Theta}\left(t\right)\right|^{2}dt\geqslant c\rho^{-1}\left[A_{\Omega,\Theta}\left(-a+\rho^{-1}\right)^{2}+A_{\Omega,\Theta}\left(b+\rho^{-1}\right)^{2}\right].
\]
Note that this last constant $c>0$ only depends on $\delta=1+r_2/r_1$ hence it is independent of $\Theta$. Using
(\ref{eq:A+B+C}) we obtain
\begin{align}
 & c\rho^{-1}\left[A_{\Omega,\Theta}\left(-a+\rho^{-1}\right)^{2}+A_{\Omega,\Theta}\left(b+\rho^{-1}\right)^{2}\right]\nonumber \\
\leqslant & c\rho^{-2d}+c\alpha\rho^{-1}\left[A_{\Omega,\Theta}\left(-a+\rho^{-1}\right)^{2}+A_{\Omega,\Theta}\left(b-\rho^{-1}\right)^{2}\right]\nonumber\\
 & +2^{2d}\int_{\mathbb{\alpha\rho\leqslant\left|\xi\right|\leqslant\beta\rho}}\left|\widehat{A_{\Omega,\Theta}}\left(\xi\right)\right|^{2}d\xi\label{eq:A+B+C-1}\\
 & +c\beta^{-1}\rho^{-1}\left[A_{\Omega,\Theta}\left(-a+\rho^{-1}\right)^{2}+A_{\Omega,\Theta}\left(b-\rho^{-1}\right)^{2}\right].\nonumber 
\end{align}
Hence (\ref{Chi=A}) gives
\begin{align*}
&\int_{\alpha\rho\leqslant s\leqslant\beta\rho} \left|\widehat{\chi_{C}}\left(s\Theta\right)\right|^{2}ds \\
\geqslant & c\left(1-\alpha-\beta^{-1}\right)\rho^{-1}\left[A_{\Omega,\Theta}\left(-a+\rho^{-1}\right)^{2}+A_{\Omega,\Theta}\left(b-\rho^{-1}\right)^{2}\right]-c\rho^{-2d}.
\end{align*}
If we take $\alpha$ suitably small and $\beta$ suitably large we
obtain
\begin{align*}
& \int_{\alpha\rho\leqslant s\leqslant\beta\rho}\left|\widehat{\chi_{C}}\left(s\Theta\right)\right|^{2}ds \\
\geqslant & c\rho^{-1}\left[A_{\Omega,\Theta}\left(-a+\rho^{-1}\right)^{2}+A_{\Omega,\Theta}\left(b-\rho^{-1}\right)^{2}\right]-c\rho^{-2d}.
\end{align*}
Since by Lemma \ref{lem:uniform ball condition}, 
\[
c\rho^{-1}\left[A_{\Theta}\left(-a+\rho^{-1}\right)^{2}+A_{\Theta}\left(b-\rho^{-1}\right)^{2}\right]\geqslant c\rho^{-d}
\]
if $\rho$ is sufficiently large we obtain the required estimate.
\end{proof}
\begin{corollary}
\label{cor:Fourier da sotto}Let $\Omega$ be a bounded convex set
in $\mathbb{R}^{d}$. Assume that $\partial\Omega$ is smooth with
finite order of contact at every point. Then there exist positive
constants $\gamma,c,C$ such that for every direction $\Theta$ and
$\rho\geqslant c,$ 
\begin{equation}
\int_{\gamma\rho\leqslant s\leqslant\rho}\left|\widehat{\chi_{\Omega}}\left(s\Theta\right)\right|^{2}ds\geqslant C\rho^{-d}.\label{eq:stima da sotto}
\end{equation}
\end{corollary}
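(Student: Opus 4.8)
The plan is to read off Corollary \ref{cor:Fourier da sotto} from Theorem \ref{thm:da sotto} together with Lemma \ref{lem:uniform ball condition}, the only genuine step being a harmless rescaling of the frequency variable so that the integration window has the prescribed shape $[\gamma\rho,\rho]$. First, by Lemma \ref{lem:uniform ball condition} there are constants $c_{0}>0$ and $\rho_{0}$ such that for all $\rho\geqslant\rho_{0}$ and every direction $\Theta$,
\[
A_{\Omega,\Theta}\left(-a+\rho^{-1}\right)^{2}+A_{\Omega,\Theta}\left(b-\rho^{-1}\right)^{2}\geqslant c_{0}^{2}\,\rho^{-(d-1)}.
\]
Feeding this into the lower bound (\ref{needave}) of Theorem \ref{thm:da sotto} produces constants $0<\alpha<\beta$, a constant $C_{0}>0$, and a threshold $c_{1}$ (all independent of $\Theta$, since the constants in Theorem \ref{thm:da sotto} and Lemma \ref{lem:uniform ball condition} are) such that for every direction $\Theta$ and every $\rho>c_{1}$,
\[
\int_{\alpha\rho\leqslant s\leqslant\beta\rho}\left|\widehat{\chi_{\Omega}}\left(s\Theta\right)\right|^{2}ds\geqslant C_{0}\,\rho^{-d}.
\]

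Second, to turn the window $[\alpha\rho,\beta\rho]$ into one of the form $[\gamma\rho,\rho]$, I would apply the last display with $\rho$ replaced by $\rho/\beta$. For $\rho>\beta c_{1}$ this gives
\[
\int_{(\alpha/\beta)\rho\leqslant s\leqslant\rho}\left|\widehat{\chi_{\Omega}}\left(s\Theta\right)\right|^{2}ds\geqslant C_{0}\left(\rho/\beta\right)^{-d}=C_{0}\beta^{d}\,\rho^{-d}.
\]
Taking $\gamma=\alpha/\beta\in(0,1)$, $C=C_{0}\beta^{d}$, and $c=\beta c_{1}$ then yields (\ref{eq:stima da sotto}).

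I do not expect a real obstacle here: all the analytic content — upgrading the Bruna--Nagel--Wainger upper bound of Corollary \ref{BNW1} to a matching lower bound on average — is already packaged in Theorem \ref{thm:da sotto}, and Lemma \ref{lem:uniform ball condition} supplies the only extra ingredient, the crude lower bound $A_{\Omega,\Theta}(\cdot)\gtrsim\rho^{-(d-1)/2}$ coming from the uniform interior ball condition that a smooth boundary enjoys. The sole point that needs a little care is the bookkeeping of the dilation $\rho\mapsto\rho/\beta$, so that the upper endpoint of the window is exactly $\rho$ and not a fixed multiple of it; one should also note that this forces $\gamma<1$, which is automatic since $\alpha<\beta$.
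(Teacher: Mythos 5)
Your proof is correct and is essentially identical to the paper's: both combine the lower bound of Theorem \ref{thm:da sotto} with Lemma \ref{lem:uniform ball condition} to get $\int_{\alpha\rho\leqslant s\leqslant\beta\rho}|\widehat{\chi_\Omega}(s\Theta)|^2ds\gtrsim\rho^{-d}$, then rescale $\rho\mapsto\rho/\beta$ to convert the window to $[\gamma\rho,\rho]$ with $\gamma=\alpha/\beta$. No differences of substance.
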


\begin{proof}
By Thorem \ref{thm:da sotto} and Lemma \ref{lem:uniform ball condition}
we have
\begin{align*}
\int_{\alpha\rho\leqslant s\leqslant\beta\rho}\left|\widehat{\chi_{C}}\left(s\Theta\right)\right|^{2}ds\geqslant & c\rho^{-1}\left[A_{\Theta}\left(-a+\rho^{-1}\right)^{2}+A_{\Theta}\left(b-\rho^{-1}\right)^{2}\right]\\
\geqslant & c\rho^{-1}\rho^{-\left(d-1\right)}=c\rho^{-d}.
\end{align*}
Let $\rho=\rho'/\beta$, then
\[
\int_{\frac{\alpha}{\beta}\rho'\leqslant s\leqslant\rho'}\left|\widehat{\chi_{C}}\left(s\Theta\right)\right|^{2}ds\geqslant c\beta^{d}\left(\rho'\right)^{-d}.
\]
\end{proof}

\begin{remark}
Since
\begin{align*}
\int_{\Sigma_{d-1}}\int_{\alpha\rho\leqslant s\leqslant\beta\rho}\left|\widehat{\chi_{\Omega}}\left(s\Theta\right)\right|^{2}dsd\Theta & \approx\rho^{1-d}\int_{\Sigma_{d-1}}\int_{\alpha\rho\leqslant s\leqslant\beta\rho}\left|\widehat{\chi_{\Omega}}\left(s\Theta\right)\right|^{2}s^{d-1}dsd\Theta\\
 & =\rho^{1-d}\int_{\left\{ \alpha\rho\leqslant\left|\xi\right|\leqslant\beta\rho\right\} }\left|\widehat{\chi_{\Omega}}\left(\xi\right)\right|^{2}d\xi
\end{align*}
from Lemma 4.2 in \cite{BCT} we readily obtain the following average estimate
\[
\int_{\Sigma_{d-1}}\int_{\alpha\rho\leqslant s\leqslant\beta\rho}\left|\widehat{\chi_{\Omega}}\left(s\Theta\right)\right|^{2}dsd\Theta\approx\rho^{-d}.
\]
which shows that the RHS $\rho^{-d}$ in (\ref{eq:stima da sotto})
cannot be replaced with $\rho^{-d+\varepsilon}$. 
\end{remark}

\section{Proof of Theorem \ref{d3}}
For the proof of the theorem we need an estimate due to J.W. Cassels and H. Montgomery (see \cite[Ch. 5, Theorem 12]{montgomery94}), which we reformulate as follows.
\begin{lemma}
For every $H>0$ there exist positive constants $c$ and $C$ such that for every choice of $N$ points 
$z_1,\ldots,z_N$ in $\mathbb{T}^d$, if $M\geqslant C\,N^{1/d}$ then
$$
\sum_{H<\left|n\right|<M}\left|\sum_{j=1}^{N}e^{2\pi in\cdot z_{j}}\right|^{2}\geqslant c NM^d.
$$
\end{lemma}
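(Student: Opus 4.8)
The plan is to reduce the estimate to a positivity argument with a Fej\'er-type kernel. Set $S(n)=\sum_{j=1}^{N}e^{2\pi i n\cdot z_{j}}$, so that $|S(n)|^{2}=\sum_{j,k=1}^{N}e^{2\pi i n\cdot(z_{j}-z_{k})}$; then for any trigonometric polynomial $P(x)=\sum_{n}\widehat{P}(n)e^{2\pi i n\cdot x}$ on $\mathbb{T}^{d}$ one has the elementary identity
\[
\sum_{n\in\mathbb{Z}^{d}}\widehat{P}(n)\,|S(n)|^{2}=\sum_{j,k=1}^{N}P(z_{j}-z_{k}).
\]
I would choose $P$ nonnegative, with $0\leqslant\widehat{P}(n)\leqslant1$ for all $n$ and $\widehat{P}$ supported in $\{|n|<M\}$, and with $P(0)$ as large as possible: then the left side is at most $\sum_{|n|<M}|S(n)|^{2}$, while on the right side every off-diagonal term $P(z_{j}-z_{k})$ is $\geqslant0$, so the right side is at least the diagonal contribution $N\,P(0)$. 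Observe that this needs no assumption on whether the $z_{j}$ are distinct.

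Concretely, I would take $P$ to be a dilated $d$-fold product Fej\'er kernel: with $L=\lfloor M/(2\sqrt{d})\rfloor$, set $P(x)=\prod_{i=1}^{d}F_{L}(x_{i})$, where
\[
F_{L}(t)=\sum_{|m|\leqslant L}\Bigl(1-\tfrac{|m|}{L+1}\Bigr)e^{2\pi i m t}=\tfrac{1}{L+1}\Bigl|\,\textstyle\sum_{m=0}^{L}e^{2\pi i m t}\,\Bigr|^{2}\geqslant0
\]
is the one-dimensional Fej\'er kernel. Then $\widehat{P}(n)=\prod_{i}\max\bigl(1-|n_{i}|/(L+1),0\bigr)\in[0,1]$, the support of $\widehat{P}$ is contained in $\{\|n\|_{\infty}\leqslant L\}$, which lies in $\{|n|\leqslant\sqrt{d}\,L\}\subseteq\{|n|\leqslant M/2\}\subseteq\{|n|<M\}$ since $\sqrt{d}\,L\leqslant M/2$, and $P\geqslant0$ with $P(0)=(L+1)^{d}$. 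Feeding this $P$ into the identity and discarding the nonnegative off-diagonal terms on the right gives, as soon as $M\geqslant2\sqrt{d}$ so that $L\geqslant1$,
\[
\sum_{|n|<M}|S(n)|^{2}\ \geqslant\ \sum_{n}\widehat{P}(n)\,|S(n)|^{2}\ =\ \sum_{j,k}P(z_{j}-z_{k})\ \geqslant\ N(L+1)^{d}\ \geqslant\ (2\sqrt{d})^{-d}\,N M^{d}.
\]

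It then remains to excise the low frequencies. Since $|S(n)|\leqslant N$ and the number of $n\in\mathbb{Z}^{d}$ with $|n|\leqslant H$ is a constant $\kappa=\kappa(H,d)$, we have $\sum_{|n|\leqslant H}|S(n)|^{2}\leqslant\kappa N^{2}$, whence
\[
\sum_{H<|n|<M}|S(n)|^{2}\ \geqslant\ \sum_{|n|<M}|S(n)|^{2}-\sum_{|n|\leqslant H}|S(n)|^{2}\ \geqslant\ (2\sqrt{d})^{-d}\,N M^{d}-\kappa N^{2}.
\]
Here the hypothesis $M\geqslant C N^{1/d}$ enters: it gives $N\leqslant M^{d}/C^{d}$, hence $\kappa N^{2}\leqslant\kappa N M^{d}/C^{d}$, so the right side above is at least $\bigl((2\sqrt{d})^{-d}-\kappa C^{-d}\bigr)N M^{d}$. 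Choosing $C=C(H,d)$ large enough that $\kappa C^{-d}\leqslant\tfrac12(2\sqrt{d})^{-d}$ (which forces $C>H$, so the range $H<|n|<M$ is nonempty, and we may also take $C\geqslant2\sqrt{d}$, making $M\geqslant2\sqrt{d}$ automatic for $N\geqslant1$) yields the claim with $c=\tfrac12(2\sqrt{d})^{-d}$.

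The argument is essentially routine; the point demanding the most care — the closest thing to an obstacle — is the choice of kernel. One must use the Fej\'er (Ces\`aro) kernel rather than the Dirichlet kernel precisely so that $P\geqslant0$ everywhere on $\mathbb{T}^{d}$, which is what renders the off-diagonal terms $P(z_{j}-z_{k})$ harmless for an arbitrary point configuration; and one must match the dilation parameter $L$ to $M$ through the comparison between the Euclidean and the sup norms, so that the frequency support of $P$ genuinely lies inside $\{|n|<M\}$. The hypothesis $M\gg N^{1/d}$ serves only to ensure that the diagonal main term $N M^{d}$ dominates the $O_{H}(N^{2})$ contributed by the removed low frequencies.
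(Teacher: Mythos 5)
Your proof is correct, and it is in fact a self-contained, more complete argument than the one the paper gives. The paper disposes of the case $H=0$ by invoking the $d$-dimensional analogue of Theorem~12 in Chapter~5 of Montgomery's book (the Cassels--Montgomery inequality) as a black box, and then handles $H>0$ by exactly the same low-frequency excision you carry out: split off $\sum_{0<|n|\leqslant H}$, bound it crudely by a constant times $N^{2}$, and absorb it using $M\geqslant C N^{1/d}$ with $C$ large. What you add is a proof of the baseline inequality $\sum_{|n|<M}|S(n)|^{2}\gtrsim_{d}NM^{d}$ via positivity of a dilated product Fej\'er kernel $P$: the duality identity $\sum_{n}\widehat{P}(n)|S(n)|^{2}=\sum_{j,k}P(z_{j}-z_{k})$, the nonnegativity of $P$ to discard off-diagonal terms, the bound $0\leqslant\widehat{P}\leqslant 1$ to compare with the target sum, and the frequency-support containment $\{\|n\|_{\infty}\leqslant L\}\subseteq\{|n|<M\}$ via the norm comparison $L=\lfloor M/(2\sqrt d)\rfloor$. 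This is essentially the standard proof of the Cassels--Montgomery lemma, so your route and the paper's ultimately rest on the same mechanism; the difference is simply that you unpack the citation. Two minor remarks: your accounting of the low-frequency term, with $\kappa=\kappa(H,d)$ the number of lattice points in $|n|\leqslant H$, is actually a bit more careful than the paper's ``$H^{d}N^{2}$''; and your observation that one may take $C\geqslant\max(H,2\sqrt d)$ to guarantee both a nonempty range and $L\geqslant1$ closes a small gap the paper leaves implicit.
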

\begin{proof}
The estimate with $H=0$ is the $d$-dimensional analog of Theorem 12 in Chaper 5 of \cite{montgomery94}. If  $H>0$ and $M>C\,N^{1/d}$, with $C$ suitably large, we have
\begin{align*}
\sum_{H<\left|n\right|<M}\left|\sum_{j=1}^{N}e^{2\pi in\cdot z_{j}}\right|^{2} & =\sum_{0<\left|n\right|<M}\left|\sum_{j=1}^{N}e^{2\pi in\cdot z_{j}}\right|^{2}-\sum_{0<\left|n\right|\leqslant H}\left|\sum_{j=1}^{N}e^{2\pi in\cdot z_{j}}\right|^{2}\\
 & \geqslant cNM^{d}-H^{d}N^{2}\geqslant cNM^{d}.
\end{align*}
\end{proof}
\begin{proof}[Proof of Theorem  \ref{d3}]
The discrepancy function 
\begin{align*}
D_{N}\left(x\right)= & \sum_{j=1}^{N}\chi_{x+r\Omega}\left(z_{j}\right)-N\left|r\Omega\right|
\end{align*}
is a periodic function of $x$ with Fourier expansion
\[
D_{N}\left(x\right)=\sum_{n\neq0}\widehat{D_{N}}\left(n\right)e^{2\pi in\cdot x}
\]
where
\[
\widehat{D_{N}}\left(n\right)=\left(\sum_{j=1}^{N}e^{2\pi in\cdot z_{j}}\right)r^{d}\widehat{\chi_{\Omega}}\left(rn\right).
\]
Hence by Parseval theorem, for every $0<H<M$ one has
\begin{align*}
 & \int_{\mathbb{T}^{d}}\int_{0}^{1}\left|\sum_{j=1}^{N}\chi_{x+r\Omega}\left(z_{j}\right)-N\left|r\Omega\right|\right|^{2}dxdr\\
= & \sum_{n\neq0}\left|\sum_{j=1}^{N}e^{2\pi in\cdot z_{j}}\right|^{2}\int_{0}^{1}\left|r^{d}\widehat{\chi_{\Omega}}\left(rn\right)\right|^{2}dr\\
\geqslant & \sum_{H<\left|n\right|<M}\left|\sum_{j=1}^{N}e^{2\pi in\cdot z_{j}}\right|^{2}\int_{0}^{1}\left|r^{d}\widehat{\chi_{\Omega}}\left(rn\right)\right|^{2}dr\\
\geqslant & \left[\inf_{H<\left|n\right|<M}\int_{0}^{1}\left|r^{d}\widehat{\chi_{\Omega}}\left(rn\right)\right|^{2}dr\right]\left[\sum_{H<\left|n\right|<M}\left|\sum_{j=1}^{N}e^{2\pi in\cdot z_{j}}\right|^{2}\right] = \mathcal{A}\cdot\mathcal{B}.
\end{align*}
To estimate $\mathcal{A}$ let $\Theta=|n|^{-1}n$ and observe that by Corollary \ref{cor:Fourier da sotto}, if $\left|n\right|\geqslant H$,
with $H$ suitably large, we have
\begin{align*}
\int_{0}^{1}\left|r^{d}\widehat{\chi_{\Omega}}\left(rn\right)\right|^{2}dr= & \left|n\right|^{-2d-1}\int_{0}^{\left|n\right|}s^{2d}\left|\widehat{\chi_{\Omega}}\left(s\Theta\right)\right|^{2}ds\\
\geqslant & \left|n\right|^{-2d-1}\int_{\varepsilon\left|n\right|}^{\left|n\right|}s^{2d}\left|\widehat{\chi_{\Omega}}\left(s\Theta\right)\right|^{2}ds\\
\geqslant & \varepsilon^{2d}\left|n\right|^{-1}\int_{\varepsilon\left|n\right|}^{\left|n\right|}\left|\widehat{\chi_{C}}\left(s\Theta\right)\right|^{2}ds\geqslant c\left|n\right|^{-d-1}.\\
\end{align*}
Hence
\[
\left[\inf_{H<\left|n\right|<M}\int_{0}^{1}\left|r^{d}\widehat{\chi_{C}}\left(rn\right)\right|^{2}dr\right]\geqslant cM^{-d-1}.
\]
For the term $\mathcal{B}$, if $M=CN^{1/d}$ the above lemma gives 
\begin{align*}
\sum_{H<\left|n\right|<M}\left|\sum_{j=1}^{N}e^{2\pi in\cdot z_{j}}\right|^{2}  \geqslant cNM^{d}.
\end{align*}
Finally, we conclude that
\[
\int_{\mathbb{T}^{d}}\int_{0}^{1}\left|\sum_{j=1}^{N}\chi_{x+rC}\left(z_{j}\right)-N\left|rC\right|\right|^{2}dxdr\geqslant cM^{-d-1}\,NM^{d}=cN^{1-1/d}.
\]
\end{proof}

\bigskip

\end{document}